\documentclass[12pt]{amsart}
\usepackage{amssymb}
\usepackage{amsmath,amscd}
\usepackage{amsfonts,latexsym,verbatim,amscd,mathrsfs,color,array}
\usepackage[all,cmtip]{xy}
\usepackage{mathrsfs}
\usepackage{multirow}
\usepackage{graphicx}
\usepackage{amsfonts, amsthm}
\usepackage{bbm}
\usepackage[hmargin=1in,vmargin=1in]{geometry}
\usepackage{mathdots}

\def\Xint#1{\mathchoice
{\XXint\displaystyle\textstyle{#1}}
{\XXint\textstyle\scriptstyle{#1}}
{\XXint\scriptstyle\scriptscriptstyle{#1}}
{\XXint\scriptscriptstyle\scriptscriptstyle{#1}}
\!\int}
\def\XXint#1#2#3{{\setbox0=\hbox{$#1{#2#3}{\int}$ }
\vcenter{\hbox{$#2#3$ }}\kern-.6\wd0}}

\def\dashint{\Xint-}

\newcommand{\LB}{\left[}
\newcommand{\RB}{\right]}
\newcommand{\LA}{\langle}
\newcommand{\RA}{\rangle}

\newcommand{\R}{{\mathbb R}}

\newcommand{\gothg}{{\mathfrak g}}

\newcommand{\scrA}{{\mathscr A}}

\newcommand{\scrG}{{\mathscr G}}

\newcommand{\End}{{\text{End}}}

\newtheorem{thm}{Theorem}[section]
\newtheorem{lemma}[thm]{Lemma}
\newtheorem{prop}[thm]{Proposition}
\newtheorem{cor}[thm]{Corollary}

   \newtheoremstyle{others}
     {3pt}
     {2pt}
     {}
     {}
     {\bf}
     {.}
     {.5em}
     {}
     
\theoremstyle{others}
\newtheorem{rmk}[thm]{Remark}

\numberwithin{equation}{section}

\begin{document}

\title[Instantons and singularities]{Instantons and singularities in the Yang-Mills flow}
\author{Alex Waldron}

\begin{abstract} Several results on existence 
and convergence of the Yang-Mills flow in dimension four are given. We show that a singularity modeled on an instanton cannot form within finite time. Given low initial self-dual energy, we then study convergence of the flow at infinite time. If an Uhlenbeck limit is anti-self-dual and has vanishing self-dual second cohomology, then no bubbling occurs and the flow converges exponentially. We also recover Taubes's existence theorem, and prove asymptotic stability in the appropriate sense.
\end{abstract}

\maketitle

\thispagestyle{empty}

\section*{Introduction.}\label{introduction}

Let $E$ be a vector bundle, with connection $A,$ over a Riemannian base manifold $M.$ Write $F_A$ for the curvature form, $|F_A|^2$ for its pointwise norm in a fixed metric, and $D_A^*$ for the adjoint of the covariant differential. The \emph{Yang-Mills flow}
$$\frac{\partial A}{\partial t} = - D_A^* F_A$$
evolves the connection by the negative gradient of the Yang-Mills functional
\begin{equation*}
\text{YM}(A) = \frac{1}{2} \int_M |F_A|^2 \, dV.
\end{equation*}

The Yang-Mills flow first appeared in the work of Atiyah and Bott \cite{atiyahbott}. It was subsequently shown by G. Daskalopoulos \cite{dask} over compact manifolds of dimension two, and by Rade \cite{rade} in dimensions two and three, that the flow exists for all time and converges. Finite-time blowup is known to occur in dimension five or higher \cite{naito}, and explicit examples of Type-I shrinking solitons were produced on $\mathbb{R}^n, 5\leq n \leq 9,$ by Weinkove \cite{weinkove}. Hong and Tian \cite{hongtian} showed that the singular set has codimension at least four, and gave a complex-analytic description in the compact Kahler case, where an application of the maximum principle shows that singularities can form only at infinite time (\cite{siu}, Ch. 1). In the case of Kahler surfaces, Donaldson's early results \cite{donsurface} for the flow on stable holomorphic bundles have 
been generalized to unstable bundles by Daskalopoulos and Wentworth (\cite{daskwent04}, \cite{daskwent07}).

The behavior of the flow on general Riemannian manifolds of dimension four, however, has not been understood well. Following the analogy with harmonic map flow in dimension two \cite{struwehm}, the foundational work of Struwe \cite{struwe} gives a global weak solution, not excluding the possibility that point singularities (bubbles) will form within finite time. Previously, outside of the Kahler setting, long-time existence and convergence have only been established 
by appealing to energy restrictions on blowup limits \cite{schlatterglobal} or by imposing a symmetric Ansatz \cite{sstz}. Moreover, finite-time singularities have long been known as a characteristic feature of critical harmonic map flow \cite{cdy}.

This paper provides several new 
theorems concerning long-time existence (Theorem \ref{selfdual}, p. \pageref{selfdual}), smooth convergence (Theorem \ref{convergence}, p. \pageref{convergence}), and asymptotic stability (Theorems \ref{parataubes}-\ref{asympstab}, pp. \pageref{parataubes}-\pageref{asympstab}) of the Yang-Mills flow in dimension four. These results rely on the splitting of two-forms into self-dual and anti-self-dual parts, together with a number of small but useful observations in the parabolic setting. 
A thorough background section is included (pp. \pageref{prelims} - \pageref{struwe}). 

\subsection*{Note on dependence of constants} Our estimates will involve 
the following constants.

\vspace{1mm}

\noindent $C_{\ref{dstarf}}$---constant associated to a particular estimate, e.g. Proposition \ref{dstarf}, where its dependence is stated.

\vspace{1mm}

\noindent $C$---universal constant; except during the proof, e.g., of Proposition \ref{dstarf}, where $C = C_{\ref{dstarf}}.$

\vspace{1mm}

\noindent $C_S = 1 + C_{\R^4}$---Sobolev constant appearing in Section \ref{curvestimates}.

\vspace{1mm}

\noindent $C_M$---constant depending only on the geometry of $M.$

\vspace{1mm}

\noindent $C_A$---Poincar\'e constant for a particular connection $A$ obeying (\ref{poincare}).

\vspace{1mm}

\noindent $R_0$---radius, depending on the geometry of $M,$ such that the metric on any geodesic ball of radius less than $R_0$ is sufficiently close to Euclidean, in a sense to be determined.

\vspace{1mm}

\noindent Each of these may increase appropriately, from an earlier to a later appearance; with the exception of $R_0,$ which may decrease, and $C_S,$ which is fixed.

\section{Preliminaries}\label{prelims}


\subsection{Vector bundles and gauge transformations}\label{vectorbundles} 
Let $\pi : E \to M$ be a vector bundle of rank $n,$ with fiberwise inner-product $\LA \cdot, \cdot \RA,$ over a compact, oriented Riemannian base manifold.

A \emph{section} of $E$ over an open set $U \subset M$ is a smooth map $s : U \to E$ such that
$$\pi \circ s = Id_U.$$
By definition, there exists a system of coordinate charts $\{ U^a \}$ for $M,$ together with a \emph{local frame} of sections $\{ e^a_\alpha \}_{\alpha=1}^n$ over $U^a$ for each $a,$ such that any section can be uniquely written
\begin{equation}\label{components}
\left. s \right|_{U^a\cap U} = (s^a)^\alpha e^a_\alpha
\end{equation}
(summing on $\alpha,$ not on $a$). The functions $(s^a)^\alpha$ are referred to as the \emph{local components} of $s.$ 

Applying (\ref{components}) with $U = U^b$, we may write
$$\left. e^a_\alpha \right|_{U^a \cap U^b} = (u^{ab})^\beta{}_\alpha \, e^b_\beta$$
in order to define the \emph{transition functions} $(u^{ab})^\beta {}_\alpha.$ This yields, for any section $s,$ the familiar transformation law
\begin{equation}\label{transf}
\left(s^b \right)^\beta = \left( u^{ab} \right)^\beta{}_\alpha \left( s^a \right)^\alpha.
\end{equation}
By definition, the transition functions (invertible matrices) satisfy the \emph{cocycle conditions}
$$u^{bc} \cdot u^{ab} = u^{ac}$$
on $U^a \cap U^b \cap U^c.$ Conversely, these data are sufficient to reconstruct the bundle $E.$

Choosing the local frames to be orthonormal
$$\LA e_\alpha^a, e_\beta^a \RA = \delta_{\alpha \beta}$$
ensures that the $u^{ab}$ lie inside the orthogonal group $O(n).$ Should these lie within a subgroup $G \subset O(n),$ we say that $E$ has \emph{structure group} $G.$ Since any compact Lie group $G$ embeds into $O(n)$ for some $n,$ studying vector rather than principal bundles with compact structure group entails no loss of generality.\footnote{It will be clear that if the connection takes values in the Lie algebra $\mathfrak{g}$ of the group $G,$ then this property will be preserved as long as we deal with smooth connections and gauge transformations, and in fact more generally (see \cite{donkron}).}

Henceforth, we will suppress the chart label and local frame, writing $s^\alpha$ for a section of $E$ in local components, with Greek index, and $s_\alpha$ for a section of $E^*.$
A Latin index $v^i$ corresponds to the section $v^i \frac{\partial}{\partial x^i}$ of the tangent bundle $TM,$ and $v_i$ to a section $v_i \, dx^i$ of the cotangent bundle $T^* M.$ 

The set of \emph{gauge transformations} $\scrG_E \subset \End E $ consists of the orthogonal matrices at each point (or elements of the structure group $G$), and a section $u$ of $\left. \scrG_E \right|_U$ defines a local metric-preserving automorphism of $E.$ 
The vector bundle of \emph{infinitesimal gauge transformations} $\gothg_E \subset \End E$ consists of skew-symmetric matrices (or elements of $\gothg$), and the sections of $\left. \gothg_E \right|_U$ correspond to the Lie algebra of $\left. \scrG_E \right|_U$ via exponentiation.
We denote the induced action of $u$ on any tensor by $u(\cdot),$ which on $\gothg_E$ coincides with the adjoint action.

We write $\Omega^k(E)$ for the bundle of \emph{$E$-valued $k$-forms}, or alternating elements of $(T^*M)^{\otimes k} \otimes E,$ with inner-product $\LA \cdot, \cdot \RA$ induced from the standard orthonormal basis of wedge elements. The components of a two-form $\omega,$ for instance, are defined by
$$\omega = \sum_{i<j} \omega_{ij} dx^i \wedge dx^j = \frac{1}{2}\omega_{ij} dx^i \wedge dx^j.$$ Write $\Omega^k (\gothg_E) \subset \Omega^k (\End E)$ for the \emph{Lie-algebra valued $k$-forms}. For $\omega, \eta \in \Omega^2(\End E),$ and similarly for forms of any degree, we define the \emph{wedge product}
$$\left( \omega \wedge \eta \right)^\alpha {}_\beta = \, \frac{1}{4} \omega_{ij}{}^\alpha{}_\gamma \, \eta_{k\ell} {}^\gamma {}_\beta \left( dx^i \wedge dx^j \wedge dx^k \wedge dx^\ell \right).$$
Defining the operator $\ast: \Omega^k \left( \gothg_E \right) \to \Omega^{4-k}(\gothg_E)$ as the linear extension of the ordinary Hodge star on differential forms, we obtain the relation
\begin{equation}\label{hodgestar}
- Tr \, \omega \wedge \ast \eta = \LA \omega, \eta \RA \, dV
\end{equation}
for $\omega, \eta \in \Omega^k(\gothg_E).$

In dimension four, the Hodge star satisfies
$$\ast^2 = (-1)^{k(4-k)}= (-1)^k$$
on $\Omega^k.$ For this reason, the two-forms (valued in any bundle) split into orthogonal positive and negative eigenspaces
$$\Omega^2 = \Omega^{2+} \oplus \Omega^{2-}.$$
A form $\omega \in \Omega^{2\pm}$ which satisfies $\ast \omega = \pm \omega$ is called \emph{self-dual} or \emph{anti-self-dual}, respectively. In normal coordinates at a point, (anti)-self-duality amounts to the three relations
\begin{equation}\label{asd}
\omega_{12} = \pm \omega_{34} \quad \quad \quad
\omega_{13} = \mp \omega_{24} \quad \quad \quad
\omega_{14} = \pm \omega_{23}.
\end{equation}

\subsection{Connections and covariant derivatives} A \emph{connection} $A$  is a metric-preserving rule for transporting fiber elements of $E,$ which is linear in the tangent directions of $M.$

Formally, a connection is equivalent to a \emph{covariant derivative}, or a map
$$
s \mapsto \nabla_A s
$$
from sections of $E$ to sections of $T^*M \otimes E,$ which satisfies
\begin{equation*}\label{leibniz}
\begin{split}
\nabla_A (f\cdot s) & = df \otimes s + f \nabla_A s \\
d \LA s,t \RA & = \LA \nabla_A s , t \RA + \LA s, \nabla_A t \RA
\end{split}
\end{equation*}
for any smooth function $f.$ In local coordinates, writing $\left( \nabla_A s \right)\left( \partial_i \right) = \nabla_i s,$ we may define the connection components
$$A_{i \beta}^\alpha = \LA e_\alpha , \nabla_i e_\beta \RA$$
in order to obtain the well-known formula
$$\nabla_i s^\alpha := (\nabla_A s)_i{}^\alpha = \partial_i s^\alpha + A_{i \beta} ^\alpha s^\beta.
$$

Under a gauge transformation or change-of-frame $u,$ the components of $A$ must transform according to the requirement
$$u(\nabla_A s) = \nabla_{u(A)} (u(s))$$
or in matrix notation
\begin{equation}\label{conntransf}
u(A) = u \cdot A \cdot u^{-1} - du \cdot u^{-1}.
\end{equation}
From this transformation law, it is evident that the difference of any two connections defines a genuine section of $\Omega^1(\gothg_E),$ as does the derivative $\dot{A}$ of a smooth family of connections. The set of all connections is thus an affine space $\scrA_E$ modeled on $\Omega^1(\gothg_E).$

Define the \emph{covariant differential} on sections $\Omega^k(E) \to \Omega^{k+1}(E)$ by the rule
\begin{equation*}
D_A(s^\alpha dx^{i_1} \wedge \cdots \wedge dx^{i_k}) = \nabla_i s^\alpha dx^i \wedge dx^{i_1} \wedge \cdots \wedge dx^{i_k}.
\end{equation*}
By abuse of notation, 
we may consider $A$ in local coordinates as a $\gothg$-valued ``\emph{connection 1-form},'' $A_{i\beta}^\alpha dx^i,$ and rewrite $D_A$ in terms of the wedge product, as follows. For $\alpha \in \Omega^k(E),$ we will write
\begin{equation*}
D_A \alpha = d\alpha + A \wedge \alpha
\end{equation*}
and for $\omega \in \Omega^k(\End E)$
\begin{equation}\label{kplusone}
D_A \omega = d\omega + A \wedge \omega + (-1)^{k+1} \omega \wedge A.
\end{equation}
We will choose based on efficiency whether to employ the form or the index notation in each derivation that follows. 
The \emph{adjoint} of the covariant derivative is given by
\begin{equation}\label{adjoint}
(\nabla_A^* \omega)_{i_1 \cdots i_k} = - g^{\ell j} \nabla_\ell \omega_{j i_1 \cdots i_k} = - \nabla^j \omega_{j i_1 \cdots i_k}
\end{equation}
which agrees, on form components, with the adjoint of the covariant differential
$$D_A^* = - \ast D_A \ast.$$

\subsection{Curvature and Bianchi identities} The \emph{curvature} $F_A$ of the connection $A$ is defined as the operator on sections of $E$
\begin{equation*}
\begin{split}
\left( D_A \right)^2 s & = D_A ( ds + A \cdot s) \\
& = d^2s + dA \cdot s - A \wedge ds + A \wedge ds + A \wedge A \cdot s \\
& = (dA + A \wedge A) s.
\end{split}
\end{equation*}
This operator is evidently $C^\infty$-linear, and defines a section
$$\frac{1}{2} F_{ij}dx^i \wedge dx^j \in \Omega^2(\gothg_E)$$ with components
$$F_{ij}{}^\alpha{}_\beta = \partial_i A_{j \beta}^\alpha - \partial_j A_{i \beta}^\alpha + A_{i \gamma}^\alpha A_{j \beta}^\gamma - A_{j \gamma}^\alpha A_{i \beta}^\gamma.$$

Writing $R_{ij}{}^k{}_\ell$ for the curvature of $\Gamma$ on $TM,$ we obtain the commutation formula
\begin{equation}\label{commie}
\begin{split}
\LB \nabla_i , \nabla_j \RB t^k{}_\ell{}^\alpha{}_\beta & = R_{ij}{}^k{}_m t^m{}_\ell{}^\alpha{}_\beta - R_{ij}{}^n{}_\ell t^k{}_n{}^\alpha{}_\beta \\
& \quad + F_{ij}{}^\alpha{}_\gamma t^k{}_j{}^\gamma{}_\beta - F_{ij}{}^\gamma{}_\beta t^k{}_j{}^\alpha{}_\gamma
\end{split}
\end{equation}
and similar formulae in general.
Note the identity
\begin{equation}\label{firstbianchi}
\begin{split}
\left( D_A^* \right)^2 F_A & = \nabla^i \nabla^j F_{ij}  = \frac{1}{2}(\nabla^i \nabla^j - \nabla^j \nabla^i)F_{ij} \\
& = \frac{1}{2}\left(-R^{ijn}{}_i F_{nj} - R^{ijn}{}_j F_{in} +  \LB F^{ij} , F_{ij} \RB \right) \\
& = 0.
\end{split}
\end{equation}
We have also the \emph{second Bianchi identity}
\begin{equation*}
\begin{split}
D_A F_A & = d(dA + A \wedge A) + A \wedge dA - dA \wedge A + A \wedge (A \wedge A) - (A\wedge A) \wedge A \\
& = d^2A + dA \wedge A - A \wedge dA + A \wedge dA - dA \wedge A \\
& = 0.
\end{split}
\end{equation*}
The latter is equivalent to the familiar identity on component matrices
$$\nabla_i F_{jk} + \nabla_j F_{ki} + \nabla_k F_{ij} = 0.$$

\subsection{Yang-Mills and instantons}\label{ymfunctional} The $L^2$ gradient of the Yang-Mills energy is obtained as follows.

For a one-form $a \in \Omega^1(\gothg_E),$ note the formula
\begin{equation}\label{curvderiv}
\begin{split}
F_{A+a} & = F_A + da + A \wedge a + a \wedge A + a \wedge a \\
& = F_A + D_A a + a \wedge a
\end{split}
\end{equation} 
and compute
\begin{equation*}
\begin{split}
\left. \frac{d}{dt}YM(A + ta) \right|_{t=0} & = \frac{1}{2} \frac{d}{dt} \left( \int \left(|F_A|^2 + 2 t \, \LA F_A, D_A a \RA \right) dV + O(t^2) \right) \\
& = \int \LA a, D_A^* F_A \RA \, dV.
\end{split}
\end{equation*}
The Yang-Mills flow is therefore defined as above, or in local components
$$\frac{\partial}{\partial t} A_{j \beta} ^\alpha = \nabla^i F_{i j}{}^\alpha{}_\beta.$$ 

By construction, any sufficiently smooth solution will satisfy the \emph{energy inequality}
$$
\text{YM}(A(0)) - \text{YM}(A(T)) = \int_0^T \|D_A^* F_A \|^2 dt.
$$
We may therefore expect a weak limit
$$A(t) \rightharpoonup A_\infty \quad (t \to \infty)$$
which, if not a minimum of $\text{YM},$ is at least a \emph{Yang-Mills connection}, satisfying
$$D_{A_\infty}^* F_{A_\infty} = 0.$$
Note that we will often abbreviate
$$\| \cdot \| = \|\cdot \|_{L^2(M)}.$$


For the self-dual and anti-self-dual parts of the curvature form, write
$$F^{\pm} = \frac{1}{2}(F \pm \ast F)$$
respectively.
From the second Bianchi identity, remark that 
\begin{equation}\label{selfdualbianchi}
\begin{split}
2 D^*F^\pm & = - \ast ( D \ast F \pm D \ast^2 F ) \\
& = D^* F.
\end{split}
\end{equation}
Therefore, if a connection is self-dual ($F^- = 0$) or anti-self-dual ($F^+ = 0$), it is a critical point of the Yang-Mills energy. These special Yang-Mills connections are called \emph{instantons}.

Recall from Chern-Weil theory that the characteristic number
\begin{equation*}
\begin{split}
\kappa(E) = \frac{1}{8\pi^2} \int Tr F \wedge F
\end{split}
\end{equation*}
is a topological invariant of $E,$ which for complex bundles coincides with the second Chern character. From (\ref{hodgestar}), we compute
\begin{equation*}
\begin{split}
\int Tr F \wedge F & = - \int \big\LA F^+ + F^-, F^+ - F^- \big\RA \, dV \\
& = \|F^-\|^2 - \|F^+\|^2
\end{split}
\end{equation*}
but by orthogonality, also
$$\|F\|^2 = \|F^+\|^2 + \|F^-\|^2.$$
We obtain the formula
\begin{equation}\label{energyformula}
\|F\|^2 = 8\pi^2 \kappa + 2 \|F^+\|^2.
\end{equation}
A connection is therefore anti-self-dual if and only if it attains the minimum energy $4\pi^2 \kappa.$

Assuming $\kappa(E) \geq 0,$ without loss of generality, one might expect to find anti-self-dual instantons on $E$---a principal aim in Donaldson theory (\cite{don4d}, \cite{donkron}).

\subsection{Evolution of curvature and Weitzenbock formulae}\label{weitzsection}

From (\ref{curvderiv}), we compute the evolution
$$\frac{\partial}{\partial t} F_A = D_A (- D_A^* F_A).$$ 
In view of the second Bianchi identity, we may rewrite this as the tensorial heat equation
$$\left( \frac{\partial}{\partial t} + \Delta_A \right) F_A = 0$$
where $\Delta_A = D_A D_A^* + D_A^* D_A$ is the Hodge Laplacian with respect to the evolving connection $A = A(t).$ Henceforth we suppress the label and write $D = D_A, \nabla = \nabla_A,$ etc., although for emphasis we continue to denote the Hodge Laplacian by $\Delta_A.$

We compute, for $\omega \in \Omega^k(\gothg_E)$
\begin{equation*}
\begin{split}
(D^*D + D D^*) \omega_{i_1 \cdots i_k} & = - \nabla^j \left(\nabla_j \omega_{i_1 \cdots i_k} - \nabla_{i_1} \omega_{j i_2 \cdots i_k} - \cdots - \nabla_{i_k} \omega_{i_1 \cdots i_{k-1} j} \right) \\
& - \nabla_{i_1} \nabla^j \omega_{j i_2 \cdots i_k} + \nabla_{i_2} \nabla^j \omega_{j i_1 i_3 \cdots i_k} + \cdots + \nabla_{i_k} \nabla^j \omega_{j i_2 \cdots i_{k-1} i_1 }.
\end{split}
\end{equation*} 
Permuting $j$ and $i_1$ in the $+$ terms of the second line, we may group all but the first term of the first line into commutators. We obtain the \emph{Weitzenbock formula}
\begin{equation*}
\left( \Delta_A \omega \right)_{i_1 \cdots i_k} = \nabla^* \nabla \omega_{i_1 \cdots i_k} - \LB F_{i_1}{}^j , \omega_{j i_2 \cdots i_k} \RB - \cdots - \LB F_{i_k}{}^j , \omega_{i_1 \cdots i_{k-1} j} \RB + Rm \# \omega
\end{equation*} 
where $\#$ is a bilinear expression in the tensor components. In particular, for a two-form, we have
\begin{equation}\label{weitz}
\begin{split}
- \left( \Delta_A \omega \right)_{ij} & = \nabla^k \nabla_k \omega_{ij} + \LB F_i{}^k , \omega_{k j} \RB - \LB F_j{}^k , \omega_{k i} \RB \\
& - R_i{}^k{}^\ell{}_k \omega_{\ell j} -  R_i{}^k{}^\ell{}_j \omega_{k \ell} + R_j{}^k{}^\ell{}_k \omega_{\ell i} +  R_j{}^k{}^\ell{}_i \omega_{k \ell}.
\end{split}
\end{equation}

We now make a simple observation about the zeroth-order terms---see \cite{lawson}, Appendix II.
For $\omega \in \Omega^{2+}$ and $\eta \in \Omega^{2-},$ we may write in normal coordinates
\begin{equation*}
\begin{split}
\omega_{1k} \eta_{k 2} - \omega_{2k} \eta_{k 1} & = \omega_{13} \eta_{3 2} - \omega_{23} \eta_{31} + \omega_{14} \eta_{4 2} - \omega_{24} \eta_{41} \\
& = (-\omega_{24}) (- \eta_{41}) - \omega_{14} \eta_{42} + \omega_{14} \eta_{4 2} - \omega_{24} \eta_{41} \\
& = 0
\end{split}
\end{equation*}
and likewise for any choice of indices. A similar calculation shows that for $\omega, \bar{\omega}$ self-dual, $\omega_{i k} \bar{\omega}_{k j} - \omega_{j k} \bar{\omega}_{k i}$ is again self-dual. These facts amount to the well-known splitting of Lie algebras
$$so(4) = so(3) \oplus so(3).$$
For the $Rm $ terms of (\ref{weitz}), one notes that the first and third are skew in $i,j,$ as are the second and fourth, and that these are each self-dual if the same is true of $\omega.$\footnote{See Freed and Uhlenbeck \cite{freeduhl}, Appendix C, for an explanation of the splitting, and for the derivation of the well-known expression ``$Rm \# = W^+ - R/3$'' in (\ref{sdweitz}).}


We conclude that the extra terms of the Weitzenbock formula (\ref{weitz}) in fact split into self-dual and anti-self-dual parts. Note also that $\Delta_A \ast = \ast \Delta_A,$ and the trace Laplacian preserves self-duality since the same is true of the (metric-compatible) 
covariant derivative. Hence
\begin{equation}\label{sdweitz}
- \left(\Delta_A \omega \right)_{ij} = \nabla^k \nabla_k \omega_{ij} + \LB F^+_i{}^k , \omega_{kj} \RB - \LB F^+_j{}^k , \omega_{ki} \RB + Rm \# \omega
\end{equation} 
for $\omega$ self-dual, and a similar formula holds for anti-self-dual forms. Applied to the self-dual curvature $F^+,$ (\ref{sdweitz}) yields the key evolution equation
\begin{equation}\label{fplusevolution}
\frac{\partial}{\partial t} F^+_{ij} = \nabla^k \nabla_k F^+_{ij} + 2 \LB F^+_i{}^k , F^+_{kj} \RB + Rm \# F^+.
\end{equation}

\subsection{Sobolev spaces} Fix, for the remainder of the paper, a smooth reference connection $\nabla_{ref}$ on $E.$
For $\omega \in \Omega^k(\End E)$ and any domain $\Omega \subset M,$ define the Sobolev norms
$$\|\omega\|_{L^p_k(\Omega)} = \left( \sum_{\ell=0}^k \|\nabla_{ref}^{(\ell)} \omega \|_{L^p(\Omega)}^p \right)^{1/p}$$
and also write
$$H^k(\Omega) = L^2_k(\Omega).$$


By (\ref{conntransf}), any connection can 
be uniquely written $D_{ref} + A,$ with $A\in \Omega^1(\gothg_E).$ We may therefore define the Sobolev norm of a connection, $\|A\|_{L^p_k(\Omega)},$ as the norm of this corresponding one-form.\footnote{A different choice of reference connection yields uniformly equivalent norms. It is important to emphasize, however, that the zeroth-order norms of a \emph{difference} of two connections, being a genuine 1-form, does not depend on the choice of gauge (or on $\nabla_{ref}$), and moreover that $\scrG_E$ acts by isometries for the $L^p = L^p_0$ distances on the affine space $\scrA_E.$}
The Sobolev space of $L^p_k(\Omega)$ connections is defined as the completion of $\left. \scrA_E \right|_{\Omega}$ with respect to the $L^p_k(\Omega)$ norm. All connections appearing in this paper will be assumed to be smooth, unless explicitly stated otherwise. 

For two open sets $\Omega_1 \subset \subset \Omega \subset M,$ there is a local Sobolev inequality
\begin{equation}\label{localsobolev}
\|\omega\|_{L^4(\Omega_1)} \leq C_{(\ref{localsobolev})} \|\omega\|_{H^1(\Omega)}.
\end{equation} 
This follows from the ordinary Sobolev inequality in dimension four and the Kato inequality; hence $C_{(\ref{localsobolev})}$ depends on $\Omega_1$ and $\Omega,$ but not $\nabla_{ref}.$ Henceforth, unless otherwise stated, we will also abbreviate $L^p_k = L^p_k (M).$ 

From (\ref{localsobolev}), note that the Yang-Mills energy is controlled locally by the $H^1$ norm of the connection. The converse also holds true 
for connections of small energy, after choosing a local Coulomb gauge \cite{uhlenbecklp}. However, due to the zeroth-order terms of the Weitzenbock formula, the Sobolev constant for $D_A \oplus D_A^*$ may blow up as the curvature of $A$ concentrates.

\subsection{Short-time existence}\label{shorttimesection}


For the benefit of the reader, we briefly recall the construction of a solution to the Yang-Mills flow
$$D(t) = D_{ref} + A(t)$$
on a maximal time interval $0 \leq t < T,$ due in dimension four independently to Struwe \cite{struwe} and to Kozono, Maeda, and Naito \cite{kozono}. In subsequent sections, $A(t)$ will always denote a solution of the form described here, which will in particular be smooth for $0< t <T$ (after a fixed change of gauge). Because the main interest of this paper is not short-time existence theory, but rather long-time existence and convergence, the discussion will be slightly informal at times.


\subsubsection{Smooth initial data} For a sufficiently smooth initial connection, short-time existence follows by a De Turck-type trick first employed by Donaldson \cite{donsurface} (see also \cite{donkron}, \S 6).

Assume first that $D(t)$ is a family of connections depending smoothly on time, $u_t \in \scrG_E$ is a family of gauge transformations with $u_{t_0} = 1,$ 
and define
$$\bar{D}(t) = u_t(D(t)).$$
The transformation law (\ref{conntransf})
and the Leibniz rule give
\begin{equation}\label{gaugederivid}
\left. \frac{d}{dt} \bar{D} \right|_{t=t_0} = \left. \frac{d}{dt} D \right|_{t=t_0} - Ds
\end{equation}
where $s = \left. \frac{d}{dt} u_t \right|_{t=t_0}.$
In general, consider the gauge transformation $u_t \cdot u_{t_0}^{-1}$ in (\ref{gaugederivid}), and apply $u_{t_0}$ to both sides. Letting
$$s(t) = u_t^{-1} \frac{d}{dt} u_t \in \gothg_E$$
this yields
\begin{equation}\label{gaugederiv}
\left. \frac{d}{dt} \bar{D} \right|_{t=t_0} = u_{t_0}\left( \left. \frac{d}{dt} D \right|_{t=t_0} \right) - \bar{D}s(t_0).
\end{equation}


Now fix a smooth connection $D_1.$ To solve the Yang-Mills flow for a smooth initial connection $D_0 = D_1 + A_0,$ write $\bar{D}(t) = D_1 + a(t)$ and consider the alternate equation
\begin{equation}\label{gaugequiv}
\frac{d}{dt} \bar{D} = \frac{d}{dt} a = - \bar{D}^* \bar{F} + \bar{D} \left( -\bar{D}^* a \right), \quad a(0) = A_0.
\end{equation}
Recall that
$$\bar{F} = F_{D_1} + D_1 a + a \wedge a = F_{D_1} + \bar{D} a + a \# a$$
so we may rewrite (\ref{gaugequiv})
\begin{equation*}
\begin{split}
\frac{d}{dt} a & = -\left( \bar{D} \bar{D}^* + \bar{D}^* \bar{D} \right) a - \bar{D}^* \left( F_{D_1} + a \# a \right) \\
& = - \Delta_{D_1} a + P(a, \nabla_1 a).
\end{split}
\end{equation*}
with $P$ a polynomial function of the tensor components with smooth coefficients.

The Sobolev multiplication theorems---see \cite{freeduhl}, Appendix A---imply that for $k \geq k_0$ sufficiently large, $P$ determines a smooth map of Hilbert spaces 
\begin{equation}\label{smoothnonlinearity}
H^k \left(\Omega^1(\gothg_E)\right) \to H^{k-1}\left(\Omega^1(\gothg_E)\right).
\end{equation}
Standard parabolic theory---in particular Lemma 3.2 of Struwe \cite{struwe}, and a straightforward instance of the fixed-point argument used in proving Theorem \ref{localexistence} below---provides a unique solution $a(t)$ for $ 0 \leq t \leq \tau,$ with $\tau$ depending only on $D_1$ and a fixed upper bound for $\|A_0\|_{H^k}.$ This solution is continuous in $H^k$ with respect to time and initial data---see Lions-Magenes \cite{lm} for basic theory.\footnote{See also Rade \cite{rade} for a slightly different method, giving the optimal $k_0$ for which these properties hold.} 

Since $D_0$ is smooth and $k \geq k_0$ was arbitrary, the solution $\bar{D}(t)$ is smooth. We may therefore define a smooth gauge-transformation $u = u_t$ by the pointwise ODE
\begin{equation}\label{gaugeode}
s = u^{-1} \frac{d}{dt} u = \bar{D}^* a, \quad u_0 = 1.
\end{equation} 
By (\ref{gaugequiv}), $D(t) = u^{-1}(\bar{D})$ is a classical solution of the Yang-Mills flow.

This solution can then be extended in the usual way: provided that there exists a smooth limit\footnote{Throughout the paper, a \emph{smooth limit} will mean a limit in $C^\infty,$ i.e. $C^k$ for all $k,$ over the domain $\Omega \subset M$ in question.} 
$D(t) \to D(T)$ as $t \to T$ over all of $M,$ we may continue the flow by concatenating another short-time solution with initial data $D(T).$ Although the intrinsic characterization of the maximal smooth extension time $T$ follows directly from the parabolic estimates of Section \ref{estimatesection} below, it will be useful to recall Struwe's sharp local existence result and to summarize his method.

\subsubsection{Rough initial data---Struwe's approach} For initial data $D_0 \in H^1,$ Struwe \cite{struwe} writes
$$\bar{D}(t) = D_1 + \bar{A}(t) = D_1 + A_{bg}(t) + a(t)$$ 
where $D_1$ is a smooth connection near $D_0,$ and $A_{bg}$ solves the ordinary heat equation with respect to $D_1,$ with
$$A_{bg}(0) = A_0 = D_0 - D_1.$$
The remaining piece $a(t),$ with $a(0)=0,$ is determined by a fixed-point argument based on sharper estimates for the nonlinearity $P$ defined above. 
\begin{thm}\label{localexistence} (Struwe \cite{struwe}, \S 4.2-4.3) Given a smooth connection $D_1,$ there exist $C_{\ref{localexistence}}$ and $\epsilon > 0$ (depending only on $(E, D_{ref})$) 
and $\tau$ (depending on $D_1$) as follows. For any $A_0 \in H^1$ with $\|A_0\|_{H^1} < \epsilon,$ there exists a smooth solution $\bar{D}(t) = D_1 + \bar{A}(t)$ to (\ref{gaugequiv}) for $0 < t \leq \tau,$ with
$$\|\bar{A}(t)\|_{H^1} \leq C_{\ref{localexistence}} \|A_0\|_{H^1}$$
and $\bar{A}(t) \to A_0$ strongly in $H^1$ as $t \to 0.$
\end{thm}


The desired weak solution $D(t)$ of the Yang-Mills flow, in the sense defined by Struwe, 
is then obtained as follows. Fix a time $0 < t_0 < \tau,$
and let $\hat{D}(t)$ be the solution with
$$\hat{D}(t_0) = \bar{D}(t_0)$$
obtained by solving (\ref{gaugeode}) for $0 < t < \tau.$ 
For any sequence of times $t_i \to 0,$ by construction 
there exist smooth gauge transformations $u_i$ such that
$$u_i(\hat{D}(t_i)) = \bar{D}(t_i).$$
Theorem \ref{localexistence} implies
$$u_i(\hat{D}(t_i)) \stackrel{H^1}{\longrightarrow} D_0.$$
Struwe also finds a strong $H^1$ limit $u_i \to u_0,$ 
and defines
$D(t)= u_0(\hat{D})$
as a weak solution with
$$D(t) \stackrel{L^2}{\longrightarrow} D_0$$
as $t \to 0.$
The solution $D(t)$ is smooth for $0 < t < \tau,$ modulo the constant gauge transformation $u_0.$ 

\subsubsection{Energy concentration} Both the constructions of Struwe, and of Kozono et. al. \cite{kozono}, yield the following criterion for long-time existence.
For a certain $\epsilon_0 > 0,$ we say that the curvature $F(t) = F_{A(t)}$ \emph{concentrates} (in $L^2$) at $x \in M$ if
$$\inf_{R > 0} \limsup_{t \to T} \int_{B_R(x)} |F(t)|^2 dV \geq \epsilon_0.$$
\begin{thm}\label{struwe}
The maximal smooth existence time,\footnote{Although the short-time solutions of (\ref{gaugequiv}) are unique, there is no canonical inverse to the construction, and one cannot conclude uniqueness for the Yang-Mills flow.
Kozono et. al. (\cite{kozono}, Theorem C) do prove that solutions in $H^k, k >> 1,$ are unique \emph{modulo gauge}, as expected. Struwe, moreover, proves that as long as $D(t)$ is irreducible ($D(t): \Omega^0(\gothg_E) \to \Omega^1(\gothg_E)$ has trivial kernel), then his weak solution is unique, including gauge.} $T,$ is characterized by concentration of the curvature $F(t)$ at some $x\in M$ as $t\to T.$
\end{thm}


\section{Finite-time existence}\label{selfdualsection} 

In view of Theorem \ref{struwe}, to prove long-time existence of the Yang-Mills flow in dimension four, it remains to control the concentration of curvature.

The first step is to obtain separate control of the self-dual curvature directly from (\ref{fplusevolution}). Applying the inner-product with $F^+$ yields 
\begin{equation*}
\left(\frac{\partial}{\partial t} + \Delta \right) |F^+|^2 = -2 |\nabla F^+|^2 + 2 \LA  F^+{}^{ij}, \LB F^+_i{}^k , F^+_{kj} \RB \RA + Rm \# F^+ \# F^+.
\end{equation*}
Writing $u=|F^+|^2,$ we obtain the differential inequality
\begin{equation}\label{fplusev}
\left(\frac{\partial}{\partial t} + \Delta \right) u \leq A u^{3/2} + B u
\end{equation}
where $A$ is a universal constant, and $B$ is a universal constant times $\|Rm\|_{L^\infty(M)}.$ Similar inequalities hold for the anti-self-dual curvature $F^-,$ and the full curvature $F.$


\subsection{Estimates on curvature evolution}\label{curvestimates} Estimates for (\ref{fplusev}) follow by adapting Moser iteration to the present borderline situation. This can be done in two different ways. Apart from Theorem \ref{parataubes}, later results will only require Proposition \ref{moser}, with $p=1.$

\begin{prop}\label{premoser} Let $u$ be a nonnegative smooth function satisfying (\ref{fplusev}) on $M \times \LB 0, T \right),$ with $M$ compact of dimension four.
There exists a universal constant $\delta > 0,$ and $R_0 > 0,$ depending on the geometry of $M,$ as follows. If $R < R_0$  is such that
$$\int_{B_R(x_0)} u(t) < \delta^2$$
for all $x_0 \in M$ and $0 \leq t < T,$ then
$$\| u(t) \|_{L^2} \leq  e^{-\frac{t}{c R^2}} \, \| u(0) \|_{L^2} + C R^{-2} \left( 1-e^{-\frac{2 \, t}{c R^2}} \right)^{1/2} \sup_{0 \leq s \leq t} \| u(s) \|_{L^1}.$$
\end{prop}
\begin{proof}
Let $\varphi\in C^\infty_0(B_R(x_0)).$ Multiplying (\ref{fplusev}) by $\varphi^2 u$ and integrating by parts, we obtain
\begin{equation*}
\begin{split}
\frac{1}{2} \frac{d}{dt} \left(\int \varphi^2 u^2\right) + \int \nabla(\varphi^2 u) \cdot \nabla u & \leq A\int \varphi^2 u^{5/2} + B \int \varphi^2 u^2 \\
\frac{1}{2} \frac{d}{dt} \left( \int \varphi^2 u^2 \right) + \int |\nabla(\varphi u)|^2  & \leq \int |\nabla \varphi |^2 u^2 + A\int \varphi^2 u^{5/2} + B \int \varphi^2 u^2.
\end{split}
\end{equation*} 
Applying the Sobolev and H\"older inequalities on $B_R$ yields
\begin{equation*}
\frac{1}{2} \frac{d}{dt} \int \varphi^2 u^2 + \left( \frac{1}{C_S} - A\delta \right) \left(\int (\varphi u)^4 \right)^{1/2}  \leq \|\nabla \varphi\|^2_{L^{\infty}} \int_{B_R} u^2  + B \int \varphi^2 u^2.
\end{equation*} 

Provided that $R < R_0,$ depending on the geometry of $M,$ we may assume the following. First, that for any $x \in M,$ the volume of any geodesic ball
$$Vol(B_R(x)) \leq c^2 R^4.$$
Second, that compactly supported functions on $B_R(x)$ obey a Sobolev inequality, with constant $C_S$ independent of $x.$ Third, that it is possible to choose a cover of $M$ by geodesic balls $B_{R/2}(x_i)$ in such a way that no more than $N$ of the balls $B_i=B_R(x_i)$ intersect a fixed ball, with $N$ universal in dimension four. For each $i,$ let $\tilde{\varphi}_i$ be a cutoff 
for $B_{R/2}(x_i) \subset B_R(x_i)$ with $\|\nabla \tilde{\varphi}_i\|_{L^\infty} < 4/R,$ and define
$$\varphi_i = \tilde{\varphi_i}/ \sqrt{\sum \tilde{\varphi}_j^2}.$$
Then $\{ \varphi_i^2 \}$ is a partition of unity for $M,$ with
$$\|\nabla \varphi_i\|_{L^\infty} < C/R.$$

We now apply the above differential inequality with $\varphi = \varphi_i,$ and sum on $i,$ to obtain
\begin{equation*}
\begin{split}
\sum_i \left( \frac{1}{2} \frac{d}{dt} \int \varphi_i^2 u^2 + \left(C_S^{-1} - A \delta \right) \left(\int (\varphi_i u)^4 \right)^{1/2} \right) & \leq \sum_i \left(C R^{-2} \int_{B_i } ( \sum_j \varphi_j^2) u^2 + B \int \varphi_i^2 u^2 \right) \\
& \leq \left( C N R^{-2} + B \right) \sum_i \int \varphi_i^2 u^2.
\end{split}
\end{equation*}
The interpolation inequality (7.10) of \cite{gt}, with $p=1, q=2, r=4,$ and $\mu = 2,$ yields
\begin{equation*}
\int (\varphi_i u)^2 \leq 2 \theta \left( \int \left(\varphi_i u \right)^4 \right)^{1/2} + 2 \theta^{-2} \left( \int \varphi_i u \right)^2.
\end{equation*}
Taking $\theta = R^2 \delta,$ we obtain
\begin{equation*}
\begin{split}
\sum_i \left( \frac{d}{dt} \int \varphi_i^2 u^2 \right. & \left. +\,\, 2 (C_S^{-1}  -  A\delta ) \left(\int (\varphi_i u)^4 \right)^{1/2} \right) \\
& \leq C\left(1 + B R^2 \right) \sum_i \left( \delta \left(\int (\varphi_i u)^4 \right)^{1/2} + \delta^{-2} R^{-6} \left( \int \varphi_i u \right)^2 \right).
\end{split}
\end{equation*}
Note that
$$\sum_i \left( \int \varphi_i u \right)^2 \leq \left( N \int u \right)^2.$$
Rearranging yields
$$ \sum_i \left(\frac{d}{dt} \int \varphi_i^2 u^2 + \epsilon \left( \int (\varphi_i u)^4 \right)^{1/2} \right) \leq  C \delta^{-2} R^{-6} \left( 1 + BR^2 \right) \left(\int u \right)^2$$
where
$$\epsilon = 2\left( C_S^{-1} - \delta \left( A+ C(1 + B R^2 ) \right) \right).$$

We now choose $R_0$ such that $B R_0^2 \leq 1,$ and let
$$\delta = \left(2C_S\left(A + 2C\right)\right)^{-1}.$$
These choices ensure that $\epsilon \geq \left(C_S\right)^{-1} > 0.$ Hence we may apply H\"older's inequality to the left-hand side and absorb the partition of unity. This yields
$$
\frac{d}{dt} \int u^2 + \dfrac{\epsilon }{c R^2} \int u^2 = \sum_i \left(\frac{d}{dt} \int \varphi_i^2 u^2 + \dfrac{\epsilon }{c R^2} \int \varphi_i^2 u^2 \right) \leq C R^{-6} \left( \int u \right)^2
$$
and
$$ \frac{d}{dt} \left(e^{\frac{\epsilon}{c R^2} t} \int u(t)^2 \right) \leq e^{\frac{\epsilon}{c R^2} t} \, C R^{-6} \left( \int u \right)^2.
$$
Integrating in time, we obtain
$$\int u(t)^2 \leq  e^{-\frac{\epsilon}{c R^2} t} \int u(0)^2 + C\epsilon^{-1} R^{-4} \left(1 - e^{-\frac{\epsilon}{c R^2} t} \right) \sup_{0\leq s \leq t} \left( \int u(s) \right)^2$$
which is equivalent to the desired bound.
\end{proof}


\begin{lemma}\label{moseriteration} \emph{(Parabolic Moser iteration)} Let $M$ be an $n$-dimensional Riemannian manifold and $B_1 \subset M$ a unit geodesic ball. For $0 < r \leq 1,$ denote the parabolic cylinder
$$P_r = B_{r} \times \left(-r^2, 0 \right).$$
Fix
$$0 < p \leq \infty \quad \quad \frac{n}{2} < q \leq \infty \quad \quad K > 0 \quad \quad 0 < \theta < 1$$
and assume that $u(x,t), f(x,t) \geq 0$ are functions on $P_1$ satisfying
$$\left( \partial_t + \Delta \right) u \leq f \cdot u$$
in the weak sense, with
$$\sup_{-1 < t < 0} \| f(\cdot, t) \|_{L^q \left(B_1 \right)} \leq K.$$ 
Then
$$\|u\|_{L^{\infty} \left(P_\theta \right)} \leq C_{\ref{moseriteration}} \|u\|_{L^p \left(P_1 \right) }.$$
The constant depends on $p, q, n, K, \theta, Vol(B_1),$ and the Sobolev constant of $B_1.$
\end{lemma}
\begin{proof} See Li \cite{li}, Lemma 19.1.
\end{proof}

\begin{prop}\label{moser} Let $R < \min (R_0, \sqrt{T} )$ 
and $u(t)$ be as in Proposition \ref{premoser}.
Choose $\tau > 0$ and let $\bar{\tau} = \tau / R^2.$ For any $p \geq 1$ and $x_0 \in M,$ there holds
\begin{equation*}
\| u (t )\|_{L^\infty(B_{R/2}(x_0))} \leq C_{\ref{moser}} \sup_{ t - \tau < s < t } \| u (s) \|_{L^p(B_R(x_0))} 
\end{equation*}
for $0 < \tau \leq t,$ where $C_{\ref{moser}} = C \left( R^{-4} \left(1 + \bar{\tau}^{-2} \right) \right)^{1/p}.$ 
\end{prop}
\begin{proof} We use a well-known argument due to Schoen and Uhlenbeck \cite{schoen}.
After rescaling
\begin{equation}\label{rescaling}
u(x,t) \to R^4 \, u(Rx, R^2 t)
\end{equation}
in geodesic coordinates about $x_0,$ we may assume that $u(x,t)$ is defined on $B_1 \times \LB 0 , 1 \RB.$ Then
$u$ satisfies
\begin{equation}\label{uevolution}
\begin{split}
\left( \partial_t + \Delta\right) u & \leq A u^{3/2} + R^2 B u \\
& \leq (A u^{1/2} + 1) \cdot u
\end{split}
\end{equation}
since we have chosen $R_0^2 B \leq 1,$ and
$$\int_{B_1} u(t) \leq \delta^2.$$
As in the proof of Proposition \ref{premoser}, for $R < R_0,$ the rescaled metric on $B_1$ (and any further rescaling) is close enough to Euclidean that we have uniform volume bounds and a uniform Sobolev constant $C_S,$ hence may apply the previous Lemma.

Let $P_r(x, t) = B_r(x) \times \LB t- r^2, t \RB,$ and abbreviate $P_r = P_r(0,1).$ Define
\begin{equation}\label{rescenergy}
e(r) = \left(1 - r\right)^4 \sup_{P_r} u
\end{equation}
and let $e_0, r_0$ be such that
\begin{equation}\label{e0}
e_0 = e(r_0) = \sup_{0 \leq r \leq 1} e(r).
\end{equation}
Choose $(x_1, t_1) \in P_r$ such that $u(x_1, t_1) = \sup_{P_r} u.$ Letting $\rho_0 = \left(1 - r_0 \right) / 2,$
we have
\begin{equation}\label{boundtorescale}
\left( \rho_0 \right)^4 \sup_{P_{\rho_0}(x_1, t_1)} u \leq 16 e_0.
\end{equation}

Assume first that $e_0 > 1.$ Letting
$$\rho_1 = \rho_0 \left(e_0\right)^{-1/4}$$ we may rescale
$$u_1(x, t) = \left( \rho_1 \right)^4 u \left( \rho_1 x + x_1 , \left( \rho_1\right)^2 \left( t - 1 \right) + t_1 \right)$$
to obtain a function $u_1$ of $(x, t) \in P_1.$
This again satisfies (\ref{uevolution}), and (\ref{boundtorescale}) implies
$$\sup_{P_1}u_1(x, t) \leq 16.$$
But then Lemma \ref{moseriteration}, applied to (\ref{uevolution}) with $p=1$ and $q = \infty,$ gives
\begin{equation}
\begin{split}
1 = u_1(0,1) & \leq C \int_0^1 \int_{B_1} u_1(x, t) \, dV dt \\
& \leq C \delta^2
\end{split}
\end{equation}
for a constant depending only on $A$ (which is universal). For $\delta$ sufficiently small, this is a contradiction.

Therefore $e_0 \leq 1.$ Directly from the definition (\ref{rescenergy}) and (\ref{e0}), we have for any $0 < r < 1$
$$\sup_{P_r}u = (1-r)^{-4} e(r) \leq (1-r)^{-4} e_0 \leq (1-r)^{-4}.$$
We may therefore apply Lemma \ref{moseriteration} to (\ref{uevolution}), to find
\begin{equation}\label{unitmoser}
\sup_{P_{1/2}} u \leq C \left( \int_0^1 \int_{B_{3/4}} u^p \, d V dt \right)^{1/p}.
\end{equation}
If $\bar{\tau} < 1,$ we rescale by an additional factor $\sqrt{\bar{\tau}},$ and again obtain (\ref{unitmoser}). Overall, undoing the rescaling, we have
\begin{equation}\label{detailedbound}
\|u (t) \|_{L^\infty \left(B_{R/2}(x_0) \right)} \leq C \left( R^{-6} \left( 1 +\bar{\tau}^{-3} \right) \int_{t-\tau}^{t} \int_{B_R(x_0)} u(s)^p\, dV ds \right)^{1/p}
\end{equation}
for $t \geq \tau.$ The desired estimate 
follows directly from (\ref{detailedbound}) with $\bar{\tau} \leq 1.$
\end{proof}

\subsection{Criterion for long-time existence}

\begin{lemma}\label{cutoff} (C.f. \cite{donkron}, 7.2.10). There is a universal constant $C$ and for any $N \geq 2, R > 0,$ a smooth radial 
function $\beta = \beta_{N,R}$ on $\R^4,$ with
$$0 \leq \beta(x)\leq 1$$
\begin{equation*}
\beta(x)=
\begin{cases}
1 & |x| \leq R/N \\
0 & |x| \geq R
\end{cases}
\end{equation*}
and
$$\|\nabla \beta \|_{L^4} + \|\nabla^{(2)} \beta\|_{L^2} < \frac{C}{\sqrt{\log N}}.$$


Assuming $R < R_0,$ the same holds for $\beta ( x - x_0 )$ on any geodesic ball $B_R(x_0) \subset M.$
\end{lemma}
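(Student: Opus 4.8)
The plan is to build $\beta$ from the logarithmic profile $\log|x|$, which is the decisive choice: in dimension four the quantities $\int|\nabla u|^4$ and $\int|\nabla^2 u|^2$ are scale invariant, and $\log|x|$ is the radial function whose transition, when spread over the range $R/N\leq|x|\leq R$, costs only a factor $\log N$ in these energies rather than a power of $N$. Concretely I would fix once and for all a smooth cutoff $\chi:\R\to[0,1]$ with $\chi\equiv 1$ on $(-\infty,1/4]$ and $\chi\equiv 0$ on $[3/4,\infty)$, flat at the two transition endpoints and with $||\chi'||_\infty,||\chi''||_\infty\leq C$, and set
$$\beta(x)=\chi\!\left(\frac{\log(N|x|/R)}{\log N}\right)$$
for $x\neq 0$, extended by $\beta\equiv 1$ near the origin. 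First I would verify the elementary assertions: $\beta$ is smooth across $0$ (there the argument of $\chi$ runs off to $-\infty$, into the region where $\chi\equiv 1$), $0\leq\beta\leq 1$, $\beta\equiv 1$ on $\{|x|\leq R/N\}$, and $\beta\equiv 0$ on $\{|x|\geq R\}$.

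The estimates then reduce to one computation. Differentiating by the chain rule, with $\rho=|x|$ and $\psi(\rho)=\log(N\rho/R)/\log N$ so that $\psi'(\rho)=1/(\rho\log N)$ and $|\psi''(\rho)|=1/(\rho^2\log N)$, one sees that $\nabla\beta$ and $\nabla^2\beta$ are supported in the annulus $\{RN^{-3/4}\leq|x|\leq RN^{-1/4}\}$, where $|\nabla\beta|\leq C/(\rho\log N)$ and $|\nabla^2\beta|\leq C/(\rho^2\log N)$ (using $|\nabla^2\rho|\leq C/\rho$, and absorbing the lower-order $\chi''$-contribution once $N$ is bounded away from $1$). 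The only integral to do is $\int_{RN^{-3/4}}^{RN^{-1/4}}\rho^{-1}\,d\rho=\tfrac12\log N$; inserting it into the polar-coordinate integrals over $\R^4$ gives $||\nabla\beta||_{L^4}^4\leq C(\log N)^{-3}$ and $||\nabla^2\beta||_{L^2}^2\leq C(\log N)^{-1}$, which yields the claimed bound $L/\sqrt{\log N}$ with an absolute $L$ once $N\geq N_0$ for a fixed $N_0$. For $1<N<N_0$ I would instead take an ordinary smooth cutoff of the annulus $R/N\leq|x|\leq R$ and enlarge $L$; this range is harmless, since it is the limit $N\to\infty$ that is used later.

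For the final sentence I would pull $\beta_{N,R}$ back to $B_R(x_0)$ through geodesic normal coordinates at $x_0$. For $R<R_0$ with $R_0$ depending only on the geometry of $M$, the metric and its first derivatives are uniformly controlled on $B_{R_0}(x_0)$, so the intrinsic $L^4$ and $L^2$ norms of $\nabla\beta$ and $\nabla^2\beta$ differ from the Euclidean ones computed above by a factor depending only on $M$, which is absorbed into $L$; the pointwise bounds $0\leq\beta\leq 1$ and the support conditions transfer verbatim.

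I do not expect a genuine obstacle --- this is essentially Donaldson--Kronheimer 7.2.10 --- so the plan is mainly bookkeeping. The two points needing care are (i) obtaining actual smoothness rather than Lipschitz regularity, which is the reason for composing with the fixed one-variable $\chi$ (flat at its endpoints) instead of using $\log|x|$ directly, and (ii) checking that after this smoothing the set where $\nabla\beta\neq 0$ still has logarithmic width $\asymp\log N$, which holds because $\chi'$ is supported in the fixed interval $[1/4,3/4]$.
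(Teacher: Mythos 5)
Your construction is the paper's own: it takes $\beta(x)=\tilde{\varphi}\big(\log(N|x|/R)/\log N\big)$ with $\tilde{\varphi}$ a fixed smooth cutoff in the cylindrical (logarithmic radial) coordinate, and your chain-rule computation over the transition annulus, giving $\|\nabla\beta\|_{L^4}^4\lesssim(\log N)^{-3}$ and $\|\nabla^2\beta\|_{L^2}^2\lesssim(\log N)^{-1}$, is exactly the verification the paper leaves implicit, as is the transfer to geodesic balls for $R<R_0$. (Your separate patch for $1<N<N_0$ is not needed for how the lemma is used and, as $N\to1^+$, no cutoff can beat the capacity lower bound $c(\log N)^{-3/4}$ for the $L^4$ norm anyway; the paper simply does not address that regime.)
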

\begin{proof} We take
$$\beta(x) = \tilde{\varphi}\left(\dfrac{\log \frac{N}{R} \,|x|}{\log N} \right)$$
where 
\begin{equation*}
\tilde{\varphi}(s)=
\begin{cases}
1 & s \leq 0 \\
0 & s \geq 1
\end{cases}
\end{equation*}
is a standard cutoff function, with respect to the cylindrical coordinate $s.$
\end{proof}

\begin{thm}\label{selfdual} Assume that $A(t)$ is a smooth solution of the Yang-Mills flow on $M$ for $0 \leq t < T,$ and write $F(t) = F_{A(t)}.$ Let $B_{R/N} \subset B_R$ be concentric geodesic balls in $M,$ with $R < R_0$ and $N \geq 2.$ There holds
\begin{equation}\label{selfdualbound}
\| F(t) \|^2_{L^2(B_{R/N})} \leq \|F(0)\|^2_{L^2(B_{R})} + \int_0^{t} \frac{\|F^+(s)\|_{L^\infty(B_R)} }{\sqrt{\log(N)}} \left(C+\|F^-(s)\|^2_{L^2(B_R)}\right) ds.
\end{equation}
Therefore, if 
$$\|F^+(t)\|_{L^\infty(M)} \in L^1 \left( \LB 0, T \right) \right)$$
or, in particular, if $F^+$ does not concentrate in $L^2$ as $t\to T,$ then $\lim_{t \to T} A(t)$ exists in $C^\infty(M)$ and the flow extends smoothly.
\end{thm}
\begin{proof}
Recall the evolution of the curvature tensor
$$\frac{\partial}{\partial t} F + D D^* F = 0.$$ 
Taking an inner-product with $\varphi^2 F$ and integrating by parts, we obtain
\begin{equation*}
\frac{1}{2} \frac{d}{dt} \int \varphi^2  |F|^2 \, dV + \int \LA D^*(\varphi^2 F) , D^*F \RA \, dV = 0.
\end{equation*}
Note from (\ref{adjoint}) that 
\begin{equation*}
\begin{split}
D^*(\varphi^2 F)_j & = - \nabla^k (\varphi^2 F_{kj} ) \\ 
& = -2 \varphi \nabla^k \varphi F_{kj} + \varphi^2 D^*F_j.
\end{split}
\end{equation*}
Abbreviating $\|\cdot \| = \|\cdot \|_{L^2(M)}$ as before, we therefore have
\begin{equation*}
\frac{1}{2} \frac{d}{dt} \|\varphi F\|^2 + \|\varphi D^* F\|^2 = 2 \int \LA \varphi \nabla^k \varphi F_{kj} , D^*F^j \RA \, dV.
\end{equation*}

On the right-hand side we substitute $D^* F = 2D^* F^+,$ 
and integrate by parts again, to obtain
$$\frac{1}{2} \frac{d}{dt} \|\varphi F\|^2 + \|\varphi D^* F\|^2 = 4 \int \Big\LA \left( \nabla_i \varphi \nabla^k\varphi + \varphi \nabla_i \nabla^k \varphi \right) F_{kj} + \varphi \nabla^k\varphi \nabla_i F_{kj} \, , (F^+)^{ij} \Big\RA \,\, dV. $$
In the inner product with the self-dual two-form $F^+,$ we may replace the term $\varphi \nabla^k \varphi \nabla_i F_{k j}$ via the identity
\begin{equation*}
\begin{split}
\left( \nabla^k \varphi \left(\nabla_i F_{k j} - \nabla_j F_{k i} \right) \right)^+ & = \left( \nabla^k \varphi \left( \left( - \nabla_j F_{i k} - \nabla_k F_{ji} \right) - \nabla_j F_{k i} \right) \right)^+ \\
& = \left( \nabla^k \varphi \nabla_k F_{ij} \right)^+ \\
& = \nabla^k \varphi \nabla_k F^+_{ij}.
\end{split}
\end{equation*}
We then write
$$\big\LA \nabla_k F^+_{ij}, (F^+)^{ij} \big\RA = \frac{1}{2}\nabla_k|F^+|^2$$
and integrate by parts once more, to obtain
\begin{equation*}
\begin{split}{}
\frac{1}{2} \frac{d}{dt} \|\varphi F\|^2 + \|\varphi D^* F\|^2 & = 4 \int \left( \nabla_i\varphi \nabla_k \varphi + \varphi \nabla_i \nabla_k \varphi \right) \left( \Big\LA F^k{}_j, (F^+)^{ij} \Big\RA - g^{ik}\frac{|F^+|^2}{4} \right) \,\, dV \\[2mm]
& = 4 \int \left( \nabla_i\varphi \nabla_k \varphi + \varphi \nabla_i \nabla_k \varphi \right) \big\LA \left( F^- \right)^k{}_j, (F^+)^{ij} \big\RA \,\, dV
\end{split}
\end{equation*}
where the second identity follows from a calculation as in Section \ref{weitzsection}. Removing an $L^\infty$ norm, and applying Young's inequality, yields
$$\frac{d}{dt} \|\varphi F\|^2 \leq  8 \, \|F^+\|_{L^\infty(B_r)} \left( \epsilon^{-1} \|F^-\|^2_{L^2} + \epsilon \left( \|\nabla \varphi\|_{L^4}^4 + \|\varphi \nabla^2 \varphi \|^2_{L^2} \right) \right).$$
Choose $\epsilon= 8 \sqrt{\log(N)} $ and $\varphi=\beta_{N,r},$ from Lemma \ref{cutoff}, to obtain the desired estimate (\ref{selfdualbound}).

By Theorem \ref{struwe}, to prove the second claim, it suffices to show that the full curvature does not concentrate in $L^2$ at time $T.$ Note that $\|F^-(t)\|_{L^2(B_R)}^2$ is bounded above by the total anti-self-dual energy, which by (\ref{energyformula}) is decreasing. Therefore if the full curvature on $B_R$ is initially less than
$\epsilon_0/2,$ then for $N$ sufficiently large, (\ref{selfdualbound}) implies that the full curvature on $B_{R/N}$ will remain less than $\epsilon_0$ at time $T.$


Lastly, by Proposition \ref{moser}, non-concentration of $F^+$ implies a uniform $L^\infty$ bound, and hence the required $L^1(L^\infty)$ bound on the self-dual energy at finite time.
\end{proof}

\begin{cor}\label{tube} Let $\delta$ be the universal constant of Proposition \ref{moser}. If an initial connection has self-dual curvature $\|F^+\|_{L^2(M)} < \delta,$ then the Yang-Mills flow exists for all time, with curvature blowing up at most exponentially as $t \to \infty.$ 
\end{cor}

\begin{cor}\label{plusminus} If the maximal existence time, $T,$ is finite, then both $F^+$ and $F^-$ must concentrate at some point 
$x_0 \in M$ as $t \to T.$
\end{cor}


\begin{rmk}\label{simul}
The proof of Theorem \ref{selfdual} gives a more refined long-time existence criterion, which will be the basis for a forthcoming paper. 
Let
$$S_{ij} = \LA F_i{}^k , F_{jk} \RA - \frac{1}{4}|F|^2 g_{ij} = 2 \LA F^+_i{}^k , F^-_{jk} \RA $$
be the stress-energy tensor for Yang-Mills, and define
$$N(x, t) = x^i x^j S_{ij}.$$
Here $x^i$ are geodesic coordinates centered at $x_0 \in M.$ 
\begin{thm} If, for some $r_0 > 0,$ there holds
$$\lim_{t \to T} \sup_{0 < r < r_0} \left| \dashint_{S^3_r(x_0)} N(x, t) \, d S_x \right| < \infty$$
then no singularity occurs at $(x_0, T).$
\end{thm}
\end{rmk}

\section{Convergence at infinite time}\label{convergencesection}

We now turn to the question of convergence of the Yang-Mills flow as $t\to \infty,$ making the assumption
\begin{equation}\label{lowselfdual}
\|F_{A}^+\|_{L^2(M)} < \delta.
\end{equation}
By (\ref{energyformula}), this condition is preserved by the flow, which, according to Corollary \ref{tube}, exists for all time. Moreover, if $E$ has structure group $SU(2),$ then
(\ref{lowselfdual}) should represent the generic end-behavior \cite{bl}. 

We are particularly concerned with the interplay between weak Uhlenbeck limits, which are taken modulo gauge away from bubble points, 
and smooth convergence of the flow. Lemma \ref{antibubble} provides 
a rapid proof in the present context, as in that of Kahler surfaces, that the former always exist and are Yang-Mills. Because of the $(T-\tau)$ factor in the estimates of Proposition \ref{omega}, however, 
fast decay 
of the energy is needed in order to conclude that the flow converges smoothly.\footnote{This subtlety also obstructs an obvious proof of long-time existence using only Uhlenbeck compactness and $\epsilon$-regularity
to produce a bubble tree and control it.} 
A cohomological assumption on the Uhlenbeck limit will supply the required estimate (\ref{poincare}) along the flow. 

For the remainder of the paper, $\Omega \subset M$ will denote an open set, and we let
$$\Omega_r = \{ x \in \Omega \mid d(x,\Omega^c) >r \} \subset \subset \Omega.$$
For points $x_1, \ldots, x_m \in M,$ write
$$M_0 = M \setminus \{ x_1, \ldots, x_m \}$$
and
$$M_r = \left(M_0 \right)_r = M \setminus \bar{B}_r(x_1)\cup \cdots \cup \bar{B}_r(x_m).$$
As before, we will abbreviate $\|\cdot\| = \|\cdot\|_{L^2(M)}.$ 

\subsection{Basic estimates}\label{estimatesection}

\begin{lemma}\label{epsilonreg} \emph{($\epsilon$-regularity)} There exists $\epsilon_0 > 0$ as follows. For $R < R_0,$ if
\begin{equation}\label{epsilonregassumption} \|F(t)\|^2_{L^2(B_R)} < \epsilon_0
\end{equation}
for all times $t$ with $-R^2 \leq t < 0,$ 
then there holds
\begin{equation*}\label{epsilonregestimate}
\|\nabla_A^{(k)} F(t)\|_{L^\infty \left(B_{R_k} \right)} < \frac{C_{\ref{epsilonreg}}}{R^{2+k}}
\end{equation*}
for $k \geq 0$ and $- R_k^2 \leq t < 0,$ where $R_k = R/2^{k+1}.$ The constant depends only on $k.$
\end{lemma}
\begin{proof} The $k=0$ bound follows from Proposition \ref{moser}, with $p=1$ and $\bar{\tau} = 1.$\footnote{Via the monotonicity formula \cite{hamilton}, it suffices to assume (\ref{epsilonregassumption}) only at $t = -R^2$ (see \cite{chenshen}, \cite{hongtian}).}
For $k \geq 1,$ this is the result of the Bernstein-Hamilton-type derivative estimates of \cite{weinkove}, Theorem 2.2.
\end{proof}

\begin{prop}\label{dstarf} Assume $\|F(t)\|_{L^\infty(B_{R}(x_0))} < K$ for $0 \leq t < T.$ Then for $\tau > 0, R< R_0,$ we have
\begin{equation*}
\begin{split}
\|\nabla^{(k)} D^*F(t)\|_{L^\infty (B_{R_k})}^2 & \leq C_{\ref{dstarf}} \|D^* F\|^2_{L^2( B_R \times \LB t-\tau, t \RB )}
\\[2mm]
\|\nabla^{(k)} F(t)\|_{L^\infty (B_{R_k})}^2 & \leq C_{\ref{dstarf}} \left( \|D^* F\|^2_{L^2( B_R \times \LB t-\tau, t \RB )} + \|F\|_{L^2( B_R \times \LB t-\tau, t \RB )}^2 \right)
\end{split}
\end{equation*}
for $k \geq 0$ and $k \tau \leq t < T.$
The constants depend on $K, k, R,$ and $\tau.$
\end{prop}
\begin{proof} One computes the evolution
\begin{equation}\label{dstarfevolution}
\begin{split}
\frac{\partial}{\partial t} \left( D^*F \right)_i & = -\frac{\partial}{\partial t} \nabla^k F_{ki} \\
& = - \LB (-D^*F)^k, F_{ki} \RB - (D^*DD^* F)_i \\
& = \LB F_i{}^k , \left( D^*F\right)_k \RB - \left( \Delta_A D^*F \right)_i \\
& = \nabla^k \nabla_k D^*F_i + 2 \LB F_i{}^k , D^*F_k \RB + Rm \# D^*F.
\end{split}
\end{equation} 
In the third line, we used the identity (\ref{firstbianchi}) to obtain the Hodge Laplacian. Multiplying (\ref{dstarfevolution}) by $D^*F$ gives
$$\left(\partial_t + \Delta\right) |D^*F|^2 \leq C\left( 1 + K\right) |D^*F|^2.$$
The first estimate, with $k=0,$ follows from Lemma \ref{moseriteration} applied to (\ref{dstarfevolution}). 
Applying a cutoff for $B_{3R_1/2} \subset B_{R_0}$ and using Young's inequality, as well as the $k=0$ estimate, one also obtains
\begin{equation}\label{step1}
\begin{split}
\int_{t-\tau/2}^{t} \|\nabla D^*F(s)\|^2_{L^2\left( B_{3R_1/2} \right)} ds \leq C \|D^*F\|^2_{L^2(B_R \times \LB t-\tau, t \RB)}.
\end{split}
\end{equation}

Next, applying $\nabla$ to (\ref{dstarfevolution}), one obtains an evolution equation
\begin{equation}\label{nastyevolution}
\left( \partial_t + \nabla^* \nabla \right) \nabla D^*F  = F \# \nabla D^*F + Rm \# \nabla D^*F + \nabla F \# D^*F + \nabla Rm \# D^*F.
\end{equation}
Note from Lemma \ref{epsilonreg}
that all derivatives of $F$ are bounded in terms of $K.$ Multiplying (\ref{nastyevolution}) by $\nabla D^*F$ and again applying Lemma \ref{moseriteration}, we bound $\|\nabla D^*F(t)\|_{L^\infty\left( B_{R_1}\right)},$ for $t \geq \tau,$ by the LHS of (\ref{step1}), which concludes the $k=1$ case. The higher derivative estimates proceed by induction, using formulae similar to (\ref{nastyevolution}).

The argument for the second estimate is identical, beginning with the inequality
$$\|\nabla F\|_{L^2(B_{R_0})}^2 \leq C \left(\|D^*F\|_{L^2(B_{R})}^2 + \|F\|_{L^2(B_{R})}^2 \right)$$
which follows from the Weitzenbock formula (\ref{weitz}).
\end{proof}


\begin{prop}\label{omega} Let $0 < \tau_0 \leq \tau < T,$ and assume $\|F(t)\|_{L^\infty (\Omega)} < K$ for $ \tau - \tau_0 \leq t < T.$ Then we have the $L^\infty$ bound
$$\|A(T) - A(\tau)\|^2_{L^\infty(\Omega_r)} \leq C_{\ref{omega}} \left( \|F(\tau - \tau_0)\|^2 - \|F(T)\|^2 \right) \left( T-\tau \right)$$
as well as the Sobolev bounds
\begin{equation}
\begin{split}
\|A(T) - A(\tau)\|^2_{H^k(\Omega_r)} & \leq C_{\ref{omega}} \left( \|F(\tau - \tau_0)\|^2 - \|F(T)\|^2 \right) \left( T-\tau \right) \\
& \quad \quad \quad \quad \quad \quad \quad \quad \cdot \left(1 + \sup_{\tau \leq t < T} \|A(t) \|^{2k}_{H^{k-1}\left( \Omega  \right) } \right)
\end{split}
\end{equation}
for $k \geq 1.$
The constants depend on $K, k, r, \tau_0, \Omega \subset M,$ and $\nabla_{ref}$ (for $k \geq 1$).
\end{prop}
\begin{proof} For the first bound, we calculate
\begin{equation}\label{omega1}
\begin{split}
\|A(T) - A(\tau)\|_{L^\infty \left( \Omega_r \right)} & \leq \int_\tau^T \|D^*F(t)\|_{L^\infty(\Omega_r)} \, dt \\
& \leq C_{\ref{dstarf}} \int_\tau^T \|D^*F\|_{L^2 \left(\Omega \times \LB t-\tau_0, t \RB \right)} \, dt \\
& \leq C (T-\tau)^{1/2} \left( \int_\tau^T \|D^*F\|^2_{L^2\left( \Omega \times \LB t-\tau_0, t \RB \right)} \, dt \right)^{1/2} \\
& \leq C (T - \tau)^{1/2} \left( \int_\tau^T \int_{t - \tau_0}^t \|D^*F(s) \|^2 \, ds dt \right)^{1/2}.
\end{split}
\end{equation}
The domain of integration
$$t - \tau_0 \leq s \leq t \quad \quad \tau \leq t \leq T$$
may be relaxed to
$$\tau - \tau_0 \leq s \leq T \quad \quad s \leq t \leq s + \tau_0.$$ Then (\ref{omega1}) becomes
\begin{equation*}
\begin{split}
\|A(T) - A(\tau)\|_{L^\infty \left( \Omega_r \right)} & \leq C (T - \tau)^{1/2} \tau_0^{1/2} \left( \int_{\tau - \tau_0}^T \|D^*F(s)\|^2 \, ds \right)^{1/2} \\
& \leq C (T - \tau)^{1/2} \left( \|F(\tau - \tau_0)\|^2 - \|F(T)\|^2 \right)^{1/2}
\end{split}
\end{equation*}
as desired. For $k=1,$ write
\begin{equation*}\label{derivcomparison}
\begin{split}
\partial_t \nabla_{ref} A & = - \nabla_{ref} D^*F \\
& = - \nabla_A D^*F + A \# D^*F.
\end{split}
\end{equation*}
We then have
$$\| \nabla_{ref} \left( A(T) - A(\tau) \right) \|_{L^2} \leq C \left( 1 + \sup \| A \|_{L^2} \right) \int_\tau^T \left( \|D^*F(t)\|_{L^\infty} + \| \nabla_A D^*F(t) \|_{L^\infty} \right) dt $$
and may apply Proposition \ref{dstarf} as above.
For $k=2,$ write
\begin{equation*}
\begin{split}
\partial_t \nabla_{ref}^{(2)} A & = - \nabla_{ref}^{(2)} D^*F \\
& = \nabla_A^{(2)} D^*F + A \# \nabla_A D^*F + \nabla_{ref}A \# D^*F + A \# A \# D^*F
\end{split}
\end{equation*}
and note that $\|A\|_{L^2} + \|\nabla_{ref}A \| _{L^2} + \|A \# A \|_{L^2} \leq C \left( 1 + \|A\|_{H^1}^2 \right).$
The higher derivative bounds proceed similarly.
\end{proof}

\subsection{Uhlenbeck limits}\label{uhllimsection}
For a sequence $t_j \to \infty,$ we say that $(E_\infty, A_\infty)$ is an \emph{Uhlenbeck limit} along the flow if there exists a subsequence of times $t_{j_k}$ and smooth bundle isometries $u_k : E \to E_\infty,$ defined on an exhaustion of open sets
$$U_1 \subset \cdots \subset U_k \subset \cdots \subset M_0 = M \setminus \{ x_1, \ldots, x_m \}$$
such that
$$u_k (A(t_{j_k})) \to A_\infty$$ 
smoothly on any $\Omega \subset \subset M_0.$

\begin{lemma}\label{antibubble} Assume $\|F^+(t)\|_{L^\infty(\Omega)} < K^+$ for $0 \leq t \leq \tau.$ Let $\epsilon_0$ be as in Lemma \ref{epsilonreg}, and assume that for some $0 < r_0 < R_0$ there holds
\begin{equation}\label{epsilonover3}
\|F(\tau)\|^2_{L^2(B_{r_0}(x_0))} < \epsilon_0 / 3
\end{equation}
for all $x_0 \in \Omega_{r_0},$ with $0 < r_0^2 < \tau.$ If
\begin{equation}\label{energynondecrease}
\|F(0)\|_{L^2(M)}^2 - \|F(\tau)\|_{L^2(M)}^2 \leq \epsilon_0 / 3
\end{equation}
then we have
$$\|\nabla_A^{(k)} F(\tau)\|_{L^\infty(\Omega_{r_0})} < \frac{C_{\ref{antibubble}}}{r_0^{2 + k}} $$
for $k \geq 0.$ The constant depends on $K^+, \|F(0)\|,$ and $k.$
\end{lemma}
\begin{proof} 
Let $x_0 \in \Omega_{r_0},$ and $\varphi$ be the cutoff of Lemma \ref{cutoff} for $B_{r_0/N}(x_0) \subset B_{r_0}(x_0).$ We apply the proof of Theorem \ref{selfdual} using $\overline{\varphi} = 1-\varphi.$ This gives
\begin{equation}\label{backwardselfdual}
\|F(\tau)\|^2_{L^2(M \setminus B_{r_0})} - \|F(t)\|^2_{L^2(M \setminus B_{r_0/N})} < \epsilon_0 / 3
\end{equation}
for $N$ large enough based on $\|F\|^2$ and $K^+,$ but independent of $x_0$ and $r_0.$ Adding (\ref{energynondecrease}), with $t$ in place of zero, and (\ref{backwardselfdual}), we obtain
$$\|F(t)\|^2_{L^2(B_{r_0/N})} - \|F(\tau)\|^2_{L^2(B_{r_0})} < 2\epsilon_0/3.$$
By (\ref{epsilonover3}), we have
$$\|F(t)\|^2_{L^2(B_{r_0/N})} < \epsilon_0$$
for $0 \leq t \leq \tau.$ The desired bounds follow from Lemma \ref{epsilonreg}.
\end{proof}


\begin{thm}\label{uhllimexist} Assume $\|F^+(t)\|_{L^\infty(M)} < K^+$ for $t$ sufficiently large. 
For any sequence $t_j \to \infty,$ there exists an Uhlenbeck limit, and any such limit is Yang-Mills. 
\end{thm}
\begin{proof} This is a direct adaptation of the arguments contained in Donaldson and Kronheimer \cite{donkron}, \S 4 and \S 6.2.4, as follows. The existence of weak $H^1$ limits on a countable family of balls in $M_0$ is the result of compactness theory for connections with bounded $L^2$ curvature in Coulomb gauge (\cite{sedlacek}, \cite{uhlenbecklp}). By Lemma \ref{antibubble}, we in fact have $L^\infty$ bounds on the curvature of $A(t_{j_k})$ and all its derivatives on each ball, for $k$ large enough. By \cite{donkron}, Lemma 2.3.11, the weak limits can be taken to be smooth limits over each ball; and by \cite{donkron}, Corollary 4.4.8, the gauge transformations can be patched together over the open sets $U_i.$\footnote{Note that Theorem 1.3(ii) of Schlatter \cite{schlatterlongtime}, which finds a Yang-Mills connection as a weak limit along a specially chosen sequence of times, does not include any patching, as this may not be possible with $H^2$ gauge transformations.}

The fact that the limiting connection is Yang-Mills away from the bubbling points, and therefore extends to a smooth Yang-Mills connection on $E_\infty,$ follows directly 
from the energy inequality, Proposition \ref{dstarf}, and \cite{uhlenbeckremov}.
\end{proof}

\subsection{Sobolev and Poincar\'e inequality for self-dual forms}\label{sobpoinc}
The following brief discussion amounts to the key analytic observation of Taubes \cite{taubes} (see also \cite{donkron}, \S 7).

Assuming $\|F_A^+\| < \delta,$ H\"older's inequality applied to the Weitzenbock formula (\ref{sdweitz}) implies, for $\omega \in \Omega^{2+}(\gothg_E),$ the Sobolev inequality
\begin{equation}\label{sobolev}
\begin{split}
\|\omega\|_{L^4(M)}^2 + \|\nabla_A \omega\|^2 & \leq C_M \left( \|D_A \omega\|^2 + \|D_A^* \omega \|^2 + \|\omega\|^2 \right) \\
& \leq C_M \left( \|D_A^* \omega \|^2 + \|\omega\|^2 \right).
\end{split}
\end{equation}
In the second line, we used the pointwise identity
$$|D_A \omega| = |-*D_A*\omega| = |D_A^*\omega|.$$

In case $A$ is an instanton, 
recall the basic complex (\cite{donkron}, \S 4.2.5) 
$$
0 \xrightarrow{\hspace*{.4cm}} \gothg_E \stackrel{D_A}{\xrightarrow{\hspace*{.7cm}}} \Omega^1(\gothg_E) \stackrel{D^+_A}{\xrightarrow{\hspace*{.7cm}}} \Omega^{2+}(\gothg_E) \xrightarrow{\hspace*{.4cm}} 0.
$$
The assumption of vanishing second cohomology group, which we will write $H^{2+}_A = 0,$ is equivalent to the statement
$$D_A^* \omega = 0 \implies \omega = 0.$$
The usual compactness argument then 
gives an inequality
\begin{equation*}
\|\omega\|^2 \leq C_A \|D_A^* \omega \|^2
\end{equation*} 
for $\omega \in \Omega^{2+}(\gothg_E).$ Hence this term can be dropped from the RHS of (\ref{sobolev}), yielding
\begin{equation*}
\|\omega\|_{L^4}^2 + \|\omega\|^2 + \|\nabla_A \omega\|^2 \leq C_A \|D_A^* \omega \|^2.
\end{equation*}

We require only the \emph{Poincar\'e inequality}
\begin{equation}\label{poincare}
\|\omega\|_{L^4}^2 + \|\omega\|^2  \leq C_{A} \|D_{A}^* \omega \|^2
\end{equation}
for $\omega \in \Omega^{2+}(\gothg_E),$ where we always take $C_A \geq C_M.$ Here $A$ need not be an instanton. This inequality has the following basic stability property in dimension four.


\begin{lemma}\label{excision} Fix $x_1, \ldots, x_m \in M,$ and let $A_0$ be a connection on a bundle $E_0 \to M$ which satisfies (\ref{poincare})
with constant $C_0 = C_{A_0}.$ Assume that $A$ is a connection on $E$ with $\|F_A^+\| < \delta,$ for which there exists a smooth bundle isometry $u : E_0 \to E$ defined over $M_r = M \setminus \bar{B}_r(x_1) \cup \cdots \cup \bar{B}_r(x_m),$ 
such that
$$\|u(A)-A_0\|_{L^4(M_r)} \leq \epsilon.$$
If $\epsilon$ and $r$ are sufficiently small, depending only on $C_0, R_0,$ and $m$,
then $A$ satisfies (\ref{poincare}) with constant $8 C_0.$
\end{lemma}
\begin{proof} Assume first that $\mbox{Supp}(\omega) \subset M_r.$ Write $\tilde{A} = u(A), \tilde{\omega} = u(\omega),$ $a = A_0 - \tilde{A}.$ We then have
$$
\|D_{A}^* \omega \|^2 = \|D_{\tilde{A}}^* \tilde{\omega} \|^2 = \|D_{A_0}^* \tilde{\omega} + a \# \tilde{\omega}  \|^2 \\
$$
and
$$\|D_{A_0}^* \tilde{\omega} \|^2 \leq 2\left( \|D_{A}^* \omega \|^2 + \| a \|_{L^4}^2 \|\omega\|_{L^4}^2 \right).$$
On the other hand, if $\mbox{Supp}(\omega) \subset B_r(x_1) \cup \cdots \cup B_r(x_m),$ then
$$\|\omega \|^2 \leq c m r^2 \|\omega \|^2_{L^4}.$$

Choose $\epsilon, r, N$ such that
$$4\epsilon^2 + c m r^2 + C/\log(N) < (8 C_0)^{-1}.$$
Let $\varphi = \sum \beta_{N,r}(x-x_i)$ be a sum of the logarithmic cutoffs of Lemma \ref{cutoff}, and $\overline{\varphi} = 1-\varphi.$
Combining the above observations, we have
\begin{equation*}
\begin{split}
\|\omega\|_{L^4}^2 + \|\omega\|^2 & \leq 2 \left( \|\varphi \omega\|_{L^4}^2 + \|\varphi \omega\|^2 + \|\overline{\varphi} \omega\|_{L^4}^2 + \|\overline{\varphi} \omega\|^2 \right) \\
& \leq 2 C_M \left( \|D_A^* (\varphi \omega ) \|^2 + \|\varphi \omega\|^2 \right) + 2 C_0 \| D^*_{A_0} (\overline{\varphi} \tilde{\omega}) \|^2 \\
& \leq 4 C_0 \left( \|\varphi D_A^* \omega\|^2 + \|\overline{\varphi} D_A^* \omega\|^2 + 2\|D\varphi \# \omega\|^2 + \left( 4 \|a\|_{L^4}^2 + cmr^2 \right) \|\omega\|_{L^4}^2 \right) \\
& \leq 4 C_0 \left( \|D_A^* \omega\|^2 + \left(2\|D\varphi\|_{L^4}^2 + 4 \epsilon^2 + cmr^2\right) \|\omega\|_{L^4}^2 \right).
\end{split}
\end{equation*}
Rearranging yields the desired estimate, where we replace $r/N$ by $r.$
\end{proof}

\subsection{Convergence}\label{convergencesubsection}

\begin{thm}\label{convergence} Fix $x_1, \ldots, x_m \in M,$ and let $A_0, E_0, \epsilon,$ and $r$ be as in Lemma \ref{excision}. Choose $\tau_0 > 0$ and $0 < r_0 < \min (r/3, R_0, \sqrt{\tau_0}).$ There exists $\bar{\delta}_1 > 0$ as follows.

Assume that $A(t)$ solves the Yang-Mills flow with $\| F^+_{A(0)} \| = \| F^+(0) \| < \delta,$ and for some $\tau \geq \tau_0,$ there hold
\begin{equation}\label{convergencelowselfdual}
\| F^+(\tau - \tau_0) \| \leq \delta_1 < \bar{\delta}_1
\end{equation}
\begin{equation}\label{convergencenonconc}
\| F(\tau) \|_{L^2(B_{r_0}(x))} < \epsilon_0 / 3 \quad \forall x \in M_{2r/3}
\end{equation}
and, for a bundle isometry $u: E \to E_0$ over $M_{r/3}$
\begin{equation}\label{closeness}
\| u(A(\tau)) - A_0 \|_{L^4(M_{r/3})} \leq \epsilon_1 < \epsilon / 2.
\end{equation}
Then the flow converges smoothly 
on $M$ as $t\to \infty,$ with limit an instanton $A_\infty$ on $E.$ We have also the bounds
$$\| u(A_\infty) - A_0 \|_{L^4(M_r)} \leq C_{\ref{convergence}} \, \delta_1 + \epsilon_1$$
$$\| A(t) - A_\infty \|_{L^\infty(M_r)} \leq C_{\ref{convergence}} \, \delta_1 \, e^{-(t-\tau)/C_1}$$
for $t \geq \tau,$ and
\begin{equation*}\label{convhigherderivs}
\begin{split}
\quad \quad \quad \| A(t) - A_\infty \|_{H^k(M_r)} & \leq C_{\ref{convergence}} \, \delta_1 \, e^{-(t-\tau)/C_1}  \left( 1 + \| A(\tau) \|^{k!}_{H^{k-1}(M_{r/2})} \right)
\end{split}
\end{equation*}
for $k \geq 1.$

The constants $C_{\ref{convergence}}$ and $\bar{\delta}_1$ depend on $C_{A_0}, r_0, \tau_0, m, k, \kappa(E),$ $\nabla_{ref}$ (for $k \geq 1$), and the geometry of $M.$ The constant $C_1$ is a universal multiple of $C_{A_0}.$ 
\end{thm}
\begin{proof} Recall, from Corollary \ref{tube}, that the solution $A(t)$ exists for all time and is smooth. 
By Lemma \ref{excision}, the inequality 
\begin{equation}\label{convpoincare}
\|F^+(t)\|^2 \leq 8C_{A_0} \|D^* F^+(t) \|^2
\end{equation} 
holds for $A(t)$ on some maximal time 
interval $ \tau \leq t < T.$ We will argue that if $\delta_1$ is sufficiently small, then $T = \infty$ and the flow converges. Assume, for contradiction, that $T < \infty.$

Applied to the global energy inequality, 
 (\ref{convpoincare}) yields
$$\frac{d}{dt}\|F^+\|^2 + C_1^{-1} \|F^+\|^2 \leq \frac{d}{dt} \|F^+\|^2 + 4 \|D^*F^+\|^2 = 0. $$ 
In view of (\ref{convergencelowselfdual}), this implies 
the exponential bound 
\begin{equation}\label{decay}
\frac{1}{2}\left(\|F(t)\|^2 - \|F(T)\|^2 \right) \leq \|F^+(t)\|^2 \leq \delta_1^2 \, e^{-(t-\tau)/C_1}
\end{equation}
for $\tau - \tau_0 \leq t \leq T.$ Proposition \ref{moser}, with $p=1,$ then implies 
the global $L^\infty$ bound
\begin{equation}
\|F^+(t)\|^2_{L^\infty(M)} \leq C_{\ref{moser}} \, \delta_1^2 \, e^{-(t-\tau)/C_1} = :  K^+(t)^2.
\end{equation}
Therefore, if $\delta_1$ is sufficiently small, we have
\begin{equation}\label{kappa}
\left( C + \|F(0)\|^2 \right) \int_\tau^T K^+(t) dt < \epsilon_0/3.
\end{equation}
By Theorem \ref{selfdual} and (\ref{convergencenonconc}), the full curvature cannot concentrate on $M_{2r/3}$ before time $T,$ and we have a uniform bound
\begin{equation}\label{curvbound}
\|F(t)\|_{L^\infty(M_{2r/3})} < K
\end{equation}
for $\tau+r_0^2 < t < T.$

In order to apply Proposition \ref{omega}, we need this curvature bound on $M_{2r/3}$ also from time $\tau - r_0^2/2.$ Note that
\begin{equation}\label{preenergy}
\delta_1^2 \geq \|F^+(\tau - r_0^2)\|^2 \geq \frac{1}{2}\left(\|F(\tau-r_0^2)\|^2 - \|F(T)\|^2 \right).
\end{equation}
Hence, provided that we choose $\delta_1^2 < \epsilon_0 / 6$ and enlarge $K$ appropriately, Lemma \ref{antibubble} gives a uniform bound of the form (\ref{curvbound}) for $\tau-r_0^2/2 < t \leq \tau + r_0^2.$ 

We may now apply Proposition \ref{omega} and (\ref{decay}) 
at each time $\tau + i,$ to conclude
\begin{equation}\label{preconvlinfty}
\|A(\tau + i + 1) - A(\tau + i)\|^2_{L^\infty(M_r)} \leq C_{\ref{omega}} \left( K^+(\tau + i) \right)^2.
\end{equation}
By the triangle inequality and geometric series, we have
\begin{equation}\label{convlinfty}
\begin{split}
\|A(T) - A(\tau)\|_{L^\infty(M_r)} & \leq C \sum K^+(\tau + i) \\
& \leq C K^+(\tau) = C \delta_1.
\end{split}
\end{equation} 
If $\delta_1$ is small enough that $C \delta_1 < \epsilon/2,$ we conclude
\begin{equation*}
\begin{split}
\|u(A(T)) - A_0 \|_{L^4(M_r)} & \leq \|u(A(T)) - u(A(\tau))\|_{L^4(M_r)} + \|u(A(\tau)) - A_0 \|_{L^4(M_{2r/3})} \\
& \leq C  \delta_1 + \epsilon_1 < \epsilon.
\end{split}
\end{equation*}
Hence, by Lemma \ref{excision}, the inequality (\ref{convpoincare}) continues beyond $t=T,$ which is a contradiction.

Therefore $T = \infty,$ and the above estimates continue as $t\to \infty.$ Letting $\tau$ increase, (\ref{convlinfty}) implies exponential convergence  in $L^\infty(M_r)$ to a limit $A_\infty,$ as desired. For the Sobolev bounds, note that (\ref{preconvlinfty}) and (\ref{convlinfty}), with $H^k$ in place of $L^\infty,$ give
\begin{equation}\label{convbootstrap}
\| A(t) - A_\infty \|_{H^k(M_r)} \leq C \, \delta_1 \, e^{-(t-\tau)/C_1}  \left( 1 + \sup_{\tau \leq s} \| A(s) \|^{k}_{H^{k-1}(M_{r/2})} \right).
\end{equation}
Assume the desired bound for $k-1.$ Over $M_{r/2},$ we have
\begin{equation*}
\begin{split}
\| A(s) \|^{k}_{H^{k-1}} & \leq \left( \|A(\tau)\|_{H^{k-1}} + \|A(s) - A(\tau)\|_{H^{k-1}} \right)^k \\
& \leq \left( \|A(\tau)\|_{H^{k-1}} + C \delta_1 \left( 1 + \|A(\tau)\|^{(k-1)!}_{H^{k-2}} \right) \right)^k\\
& \leq C \left( 1 + \|A(\tau)\|_{H^{k-1}}^{k!} \right)
\end{split}
\end{equation*}
for $s \geq \tau,$ since $\delta_1 < \bar{\delta}_1.$ Substituting into (\ref{convbootstrap}) gives the $k$'th Sobolev bound on $M_r.$

Note that Theorem \ref{selfdual} and (\ref{kappa}) also imply that the curvature does not concentrate anywhere on $M$ as $t\to \infty.$ The above estimates, with $\{x_i \} = \emptyset,$ imply smooth convergence on all of $M.$ 
\end{proof}

\begin{cor}\label{convergencecorollary} Assume that $\|F^+\| < \delta,$ and there exists an Uhlenbeck limit $(E_\infty, A_\infty)$ which is an instanton with $H^{2+}_{A_\infty}=0.$ Then $E_\infty = E,$ and the flow converges smoothly to a connection gauge-equivalent to $A_\infty.$
\end{cor}
\begin{proof} By assumption, there exist times $t_i \to \infty$ for which (\ref{convergencenonconc}) and (\ref{closeness}) are satisfied.
Since $F^+_{A_\infty}=0$ and $\| F^+(t)\|_{L^\infty(M)}$ is uniformly bounded, (\ref{convergencelowselfdual}) is also satisfied for $\tau = t_i$ sufficiently large.
Therefore, smooth convergence $A(t) \to A(\infty)$ on $M$ 
follows from Theorem \ref{convergence}.
The limit, $A(\infty),$ is related to $A_\infty$ by a bundle map $u = \lim u_i$ defined on $M_0.$ Both connections are smooth, and $u \in L^\infty(M)$ by definition. 
Bootstrapping via (\ref{conntransf}) shows that $u$ is itself smooth over $\{x_i\},$ and $u(A(\infty)) = A_\infty$ on $M.$ 
\end{proof}


\begin{rmk} The results of this section are strongly analogous to those of Topping \cite{toppingrigidity} on harmonic map flow between spheres. He concludes uniqueness of the positions of the bubbles, whereas we show that they do not form. See \cite{thesis} for further discussion of the contrast between harmonic map and Yang-Mills flow, also explored by Grotowski and Shatah \cite{groshat} in the equivariant setting.
\end{rmk}

\section{Global behavior near the minimum energy}\label{asdsection}

In this final section, we derive several consequences of the above results for the global behavior of the Yang-Mills flow at low self-dual energy. The assumptions and notation remain as in Section \ref{convergencesection}.

We begin with a trivial gauge-fixing lemma which is convenient for controlling the flow at short time, for non-simply-connected $M.$ The proof expresses the fact that two flat connections which are $L^p$ close, $p \geq 1,$ are close modulo gauge in any norm.

\begin{lemma}\label{gaugelemma} Let $0 < r < R_0, 1 \leq p \leq q \leq \infty.$ Assume that $A_1, \bar{A}_1, A_2, \bar{A}_2,$ are smooth connections on $E$ over $\Omega \subset M,$ with $\bar{A}_1, \bar{A}_2$ flat. 
There exists $\bar{\epsilon} > 0$ as follows.

If, for $\epsilon < \bar{\epsilon},$ both
\begin{equation}\label{gaugelemmacloseness1}
\| A_1 - A_2 \|_{L^p(\Omega)} \leq \epsilon
\end{equation}
and
\begin{equation}\label{gaugelemmacloseness2}
\| A_1 - \bar{A}_1 \|_{L^q(\Omega)} + \| A_2 - \bar{A}_2 \|_{L^q(\Omega)} \leq  \epsilon
\end{equation}
then there exists a gauge transformation $u$ over $\Omega_r$ such that
$$\|u(A_1) - A_2\|_{L^q(\Omega_r)} \leq C_{\ref{gaugelemma}} \epsilon.$$

The constant and $\bar{\epsilon}$ depend on $\Omega, r, p, q,$ and the structure group of $E.$
\end{lemma}
\begin{proof} If $\Omega$ is simply-connected, then there exist gauge transformations $u_1, u_2$ such that $u_1(\bar{A}_1) = u_2(\bar{A}_2) = 0.$ The triangle inequality, applied to (\ref{gaugelemmacloseness2}), implies
$$\|u_1(A_1) - u_2(A_2)\|_{L^q(\Omega)} \leq 2 \epsilon.$$
Choosing $u = u_2^{-1} u_1,$ we obtain the desired result.


In general, (\ref{gaugelemmacloseness1}), (\ref{gaugelemmacloseness2}), and the triangle inequality imply that
$\|\bar{A}_1 - \bar{A}_2\|_{L^p(\Omega)} \leq C \epsilon.$
Fix a finite cover $\{B^a \}_{a = 1}^N$ of $\Omega_r,$ consisting of geodesic balls of radius $r/2$ centered in $\Omega_r.$ Assume, without loss of generality, that in the local gauge on each $B^a,$ the connection matrix $\bar{A}^a_2 \equiv 0.$
Then, since $p \geq 1$ and $\|\bar{A}^a_1\|_{L^p(B^a)} \leq C \epsilon$ for all $a,$ we may choose points $x^a \in B^a$ with the following property. For the minimizing geodesic $\gamma$ between $x^a$ and $x^b$ inside $B^a \cup B^b,$ there holds 
\begin{equation}\label{radialgaugecloseness}
\int_{\gamma \cap B^a} |\bar{A}^a_1(\gamma(t)) | |\gamma'| dt \leq C \epsilon
\end{equation}
and similarly for $\bar{A}^b_1$ over $B^b.$
Now let $u^a$ be the radial gauge for $\bar{A}^a_1$ 
on $B^a$ centered at $x^a,$ with $u^a(x^a) = 1,$ and put
$$w^{ab} = \left. u^b \right|_{B^a} (u^a )^{-1} .$$
Then by (\ref{radialgaugecloseness}), for $\epsilon \leq \bar{\epsilon},$ we have
\begin{equation}\label{smalltransitions}
\left| \log w^{ab} \right| + \left| d w^{ab} \right| \leq C \epsilon.
\end{equation} 
over $B^a \cap B^b.$ In fact, the $w^{ab}$ are initially constant; 
we then modify the local frames $u^a$ to obtain a frame $u$ over $\Omega_r.$ Write 
$$U^c = \bigcup_{a=1}^c B^a.$$
As a base case, on $U^1 = B^1,$ let $u = u^1,$ so that $u(\bar{A}_1) \equiv 0.$ 
 Assume, by way of induction, that (\ref{smalltransitions}) holds for all $a,b;$ and moreover that for $ a, b < c,$ $u^a$ and $u^b$ agree on $B^a \cap B^b$ and hence define a frame $u$ on $U^{c-1}$ such that
\begin{equation}\label{desiredgaugecloseness}
\| u(\bar{A}_1) - \bar{A}_2 \|_{L^\infty(U^{c-1})} \leq C \epsilon.
\end{equation} 
Since $B^c \cap U^{c-1}$ has finitely many connected components, one can easily write down a gauge $w$ on $B^c$ with $w = w^{ac}$ on $B^a \cap B^c$ for all $a < c,$ and $\left| dw \right| < C \epsilon.$ We then modify
$$u^c\to w \cdot u^c.$$
The result agrees with $u$ on the overlap, and again satisfies (\ref{smalltransitions}). Since the balls $B^a$ are fixed and of finite number, for $\epsilon < \bar{\epsilon}$ the induction hypotheses (\ref{smalltransitions}), (\ref{desiredgaugecloseness}) will be satisfied appropriately.

We have now constructed 
a gauge transformation $u$ such that $\| u(\bar{A}_1) - \bar{A}_2 \|_{L^\infty(\Omega_r)} \leq C \epsilon.$ 
Over $\Omega_r,$ the triangle inequality gives
$$\|u(A_1) - A_2\|_{L^q} \leq \|u(A_1) - u(\bar{A}_1)\|_{L^q} + \|u(\bar{A}_1) - \bar{A}_2 \|_{L^q} + \|\bar{A}_2 - A_2\|_{L^q} \leq C \epsilon$$
as desired. \end{proof}

\begin{thm}\label{parataubes} \emph{(Theorem 1.1 of Taubes \cite{taubes}, parabolic version.)} 
Let $(\bar{E}, \bar{A})$ be a flat bundle on $M$ with $H_{\bar{A}}^{2+}=0,$ and let $r$ and $\epsilon$ be as in Lemma \ref{excision}. For any $p > 1, K^+ > 0,$ and points $x_1, \ldots, x_m \in M,$ there exist $\delta_1, \epsilon_1> 0$ as follows. If $A(0)$ is a connection on $E$ with
\begin{equation*}
\begin{split}
\|F_{A(0)}^+\|_{L^{2p}(M)} & \leq K^+ \\
\|F_{A(0)}^+\|_{L^2(M)} & \leq \delta_1 \\
\end{split}
\end{equation*}
\begin{equation}\label{taubescloseness}
\| u(A(0)) - \bar{A} \|_{H^1(M_{r/2})} \leq \epsilon_1
\end{equation} 
then the flow with initial data $A(0)$ converges smoothly as $t \to \infty$ to an instanton near $\bar{A}$ in $L^4(M_r)$ modulo gauge.\footnote{In fact it will be close over $M_r$ in any Sobolev norm for $\delta_1 + \epsilon_1$ sufficiently small, as remarked in \cite{donkron} \S 4.4.2.}
\end{thm}
\begin{proof} Assuming $C_{(\ref{localsobolev})} \, \epsilon_1 < \epsilon,$ 
(\ref{taubescloseness}) and the local Sobolev inequality (\ref{localsobolev}) imply both a Poincar\'e estimate (\ref{poincare}) for $A(0),$ and an initial  
energy bound $$\|F(0)\|_{L^2(M_{3r/2})} < C \epsilon_1.$$

To control the curvature for a short time, we apply the estimates of Section \ref{curvestimates}, with $u(t) = |F^+(t)|^2.$ The proof of Proposition \ref{premoser}, as given for the $L^2$ norm, can with heavier notation be 
adapted to the $L^p$ norm, for any $p > 1,$ to give a uniform bound
$$\| u(t)\|_{L^{p}} \leq K^+ + C.$$
Proposition \ref{moser}, with the explicit constant, then implies $\| u(t) \|_{L^\infty} \leq C \, \left( 1 + t^{-2/p} \right),$ i.e.
\begin{equation*}
\begin{split}
\| F^+(t) \|_{L^\infty} & \leq C \, \left( 1 + t^{-1/p} \right).
\end{split}
\end{equation*}
Since $p > 1,$ the function $\| F^+(t) \|_{L^\infty}$ is $L^1,$ 
and there exists $0 < \tau < 1$ such that
$$\int_0^\tau \|F^+(t)\|_{L^\infty(M)} \, dt < \epsilon_1.$$
Theorem \ref{selfdual} then implies
$$\|F(t)\|_{L^2 (M_{r} )} < C \epsilon_1$$
for $0 \leq t \leq \tau.$

Assume first that $M$ is simply-connected. 
From Proposition \ref{dstarf} and the energy inequality, the curvature at time $\tau$ on $M_{3r/2}$ is bounded by $C(\delta_1 + \epsilon_1),$ as are all its derivatives. We therefore have the curvature-dependent bounds sufficient for the Coulomb gauge patching argument of \cite{donkron}, \S 4.4.2.\footnote{Proposition \ref{dstarf} exactly replaces \cite{donkron}, Theorem 2.3.8, which gives curvature-dependent bounds from the ASD equation.} In particular, Proposition 4.4.10 of \cite{donkron} states that for $\delta_1$ sufficiently small, there exists a gauge transformation $u$ on $M_{r}$ with 
$$\|u( A(\tau)) - \bar{A} \|_{L^4(M_{r})} < C \epsilon_1.$$ 
The existence of $\delta_1>0$ as required now follows from Theorem \ref{convergence}.

If $M$ is not simply-connected, we argue as follows. Let $\pi : \tilde{M} \to M$ be the universal cover, and choose a strongly 
simply-connected domain $\Omega \subset \tilde{M}$ covering $M_r,$ which is a finite union of preimages of $B^a \subset M_{r/2},$ with $B^a \cap B^b$ connected.\footnote{This can be done for instance by lifting the $B^a$ to $\tilde{M}$ using a set of 
paths which form a spanning tree for the incidence graph of $\{B^a\}.$} 
By \cite{donkron}, Prop. 4.4.10, as before we may choose a gauge $v$ on $\Omega_{r/2}$ such that
\begin{equation}
\|v( \pi^*A(\tau)) - \pi^*\bar{A}\|_{L^4(\Omega_{r/2})} < C \epsilon_1.
\end{equation}
If this is done using preimages of Coulomb gauges on the $ B^a,$ then $v^{-1}(\pi^*\bar{A})$ descends to
a flat connection on $E$ over $M_{r/2}.$ 
Calling this connection $\bar{A}_1,$ we obtain
\begin{equation}\label{a2closeness}
\|A(\tau) - \bar{A}_1\|_{L^4(M_{r/2})} \leq C \epsilon_1.
\end{equation}
Note also, directly from the flow equation and the energy inequality, that
\begin{equation}\label{l2closeness}
\| A(\tau) - A(0) \|_{L^2(M)} \leq \tau^{1/2} \left( \int_0^{\tau} \|D^*F\|_{L^2(M)}^2 \, dt \right)^{1/2} \leq \delta_1.
\end{equation} 

In view of (\ref{taubescloseness}), (\ref{a2closeness}), and (\ref{l2closeness}), we may apply Lemma \ref{gaugelemma} with $p=2$ and $q=4$ to $A(0), A(\tau),$ and the two flat connections $\bar{A}, \bar{A}_1.$ We conclude that there in fact exists a gauge transformation $u$ on $M_{r}$ with
$$\| u(A(\tau) ) - A(0)\|_{L^4(M_{r})} \leq C_{\ref{gaugelemma}} (\delta_1 + \epsilon_1).$$
The desired result follows again from Theorem \ref{convergence}.
\end{proof}

\begin{thm}\label{chargeone} Assume $E$ has structure group $SU(2)$ with $\kappa (E) = 1,$ $H^{2+}(M)=0,$ and $\pi_1(M)$ has no nontrivial representations in $SU(2).$ There exists $\delta_1 > 0$ such that if $\|F^+(0)\| < \delta_1,$ then no bubbling occurs as $t\to \infty.$ If an Uhlenbeck limit $A_\infty$ is an instanton with $H^{2+}_{A_\infty} = 0,$ then it is unique, and the flow converges exponentially.
\end{thm}
\begin{proof} Assume, by way of contradiction, that bubbling occurs on a sequence $t_i \to \infty.$ The 
blowup limits constructed by Schlatter 
\cite{schlatterlongtime} at a presumed singularity, as well as the Uhlenbeck limit, preserve the structure group. 
Due to the $L^\infty$ bound on $F^+,$ the blowup limit at a bubble must be anti-self-dual. Since it is a nontrivial $SU(2)$ instanton on $S^4$ 
with energy less than $4 \pi^2 + \delta_1^2,$ it must have energy exactly $4 \pi^2.$

Let $A_\infty$ be the Uhlenbeck limit obtained from Theorem \ref{uhllimexist} on the same sequence $\{ t_i \},$ which has total energy less than $\delta_1^2.$ 
We may apply \cite{donkron}, Prop. 4.4.10, on the universal cover of $M$ as in the preceding proof (with $\{x_i \} = \emptyset$). 
This implies that for $\delta_1$ sufficiently small, $A_\infty$ must be $L^4$-close to a flat connection $\bar{A}$ on $M,$ modulo gauge. But, by the assumption on $\pi_1(M),$ $\bar{A}$ is equivalent to the product connection on the trivial bundle, and so 
$$H^{2+}_{\bar{A}} = H^{2+}(M) = 0.$$ 
Hence, for $\delta_1$ sufficiently small, $A_\infty$ satisfies a Poincar{\'e} estimate (\ref{poincare}).
By Theorem \ref{uhllimexist}, $A_\infty$ is Yang-Mills, hence by (\ref{selfdualbianchi}) and (\ref{poincare}) it must be anti-self-dual. By integrality of $\kappa(E)$ 
for $SU(2)$-bundles, $A_\infty$ is flat, and also equivalent to the product connection. Corollary \ref{convergencecorollary} then implies that the flow converges, which is a contradiction.

Therefore any Uhlenbeck limit exists smoothly.
The last statement follows again from Corollary \ref{convergencecorollary}.
\end{proof}

\begin{thm}\label{asympstab} The instantons with $H^{2+} = 0$ are asymptotically stable in the $H^1$ topology. In other words, given an anti-self-dual connection $A_1$ 
with $H^{2+}_{A_1} = 0$ and an $H^1$ neighborhood $U$ of $A_1,$ there exists a neighborhood $U_1 \subset U$ of initial connections for which the limit under the flow will again be an instanton with $H^{2+} = 0,$ lying in $U$ modulo smooth gauge transformations.
\end{thm}
\begin{proof} Choose the given instanton $A_1$ for the required smooth connection $D_1 = D_{ref} + A_1$ in Struwe's construction---see Section \ref{shorttimesection}. Let $\epsilon > 0$ be such that the $H^1$ neighborhood of $A_1$ of size $C_{\ref{localexistence}} \epsilon$ is contained in $U,$ and choose $U_1$ to be the neighborhood of size $\epsilon.$ By Theorem \ref{localexistence}, the gauge-equivalent flow (\ref{gaugequiv}) with initial data in $U_1$ will remain in $U$ for a time $\tau.$
Choosing $U_1$ smaller, we also obtain the three conditions (\ref{convergencelowselfdual}), (\ref{convergencenonconc}), (\ref{closeness}) with $\delta_1$ arbitrarily small. We are then in the situation of Theorem \ref{convergence}, which may be applied with $\tau_0 = \tau$ and $\{x_i\} = \emptyset.$
\end{proof}

\begin{cor} On any $SU(2)$-bundle $E \to M,$ there exists a nonempty $H^1$-open set of initial connections for which the Yang-Mills flow exists for all time and, if $\kappa(E) \geq 0$ 
and $H^{2+}(M)=0,$ converges exponentially.
\end{cor}
\begin{proof} Following Freed and Uhlenbeck \cite{freeduhl}, for any $\delta_1,$ one can construct smooth pointlike 
 $SU(2)$-connections with $\|F^+\|_{L^2} < \delta_1$ and $\|F^+\|_{L^\infty} < C$ (p. 124). Provided $H^{2+}(M)=0,$  for $\delta_1$ sufficiently small, Theorem \ref{parataubes} yields exponential convergence to an instanton $A_\infty.$ By Theorem \ref{asympstab}, convergence holds for initial data in an $H^1$-open neighborhood.
\end{proof}

\begin{cor}\label{retraction} There exists a $\scrG_E$-invariant $H^1$-open set $N \subset \scrA_E,$ such that the flow gives a deformation retraction, with respect to the $H^k$ topology on $\scrA_E / \scrG_E,$ for $k >> 1,$ from $N \cap H^k$ onto the instantons with $H^{2+}=0.$
\end{cor}
\begin{proof} Let $N$ be the union of the neighborhoods obtained in Theorem \ref{asympstab}, which we may take to be invariant under smooth gauge transformations. By Kozono et. al., Theorem C, for $k$ sufficiently large, the Yang-Mills flow is in fact unique modulo gauge as long as it exists in $H^k,$ and therefore gives a well-defined map on $N / \scrG_E$ fixing the instantons.

It remains to show that this map is continuous modulo gauge in the $H^k$ topology. 
This follows by combining the parabolic theory of Section \ref{shorttimesection} with the uniform convergence statement of Theorem \ref{convergence}. 
For, two connections in $N$ which are initially $H^k$-close, for $k \geq k_0,$ 
 remain so under the gauge-equivalent flow (\ref{gaugequiv}). They therefore remain close, modulo gauge, for an arbitrarily long time; but then both are close to their respective limits under the Yang-Mills flow.

To spell this out, first note that since the flow converges smoothly, we always have
\begin{equation}\label{crudebound}
\| A(t) \|_{H^k} < K
\end{equation}
for a constant $K$ depending on $A(0) \in N.$ 
 By Section \ref{shorttimesection}, there exists $\tau> 0$ such that the map
$$A(t) \to A(t + \tau)$$
is continuous in $H^k,$ modulo gauge, for all solutions 
satisfying (\ref{crudebound}). Since continuity is a local property and closed under composition, we conclude that for any $0 \leq T < \infty,$ the map $A(t) \to A(t + T)$ is continuous modulo gauge on $N \times \LB 0 , \infty \right).$ 

Now, let $A_1$ be an instanton with $H_{A_1}^{2+}=0,$ and fix a neighborhood $U \ni A_1.$ 
By Theorem \ref{selfdual}, there exists a sufficiently small neighborhood $V \subset U$ with the property that if $A(t)$ satisfies
\begin{equation}\label{v}
A(\tau - 1), A(\tau) \in V
\end{equation}
for any $\tau \geq 1,$ then
\begin{equation}\label{desiredlimit}
\lim_{t \to \infty} A(t) \in U.
\end{equation}
To show continuity at infinite time, let $A'(t)$ 
be a solution with $A'(0) \in N$ and $\lim_{t \to \infty} A'(t) = A_1.$ Choose $\tau$ such that $A'(t)$ satisfies (\ref{v}). Then by continuity on $\LB 0, \tau \RB,$ for $A''(0)$ sufficiently close to $A'(0),$ $A''(t)$ will also satisfy (\ref{v}) after changing gauge. Therefore $A''(t)$ also satisfies (\ref{desiredlimit}), as desired.
\end{proof}

\subsection*{Acknowledgements.} These results formed part of the author's PhD thesis \cite{thesis} at Columbia University, and he thanks his advisor, Panagiota Daskalopoulos, for vital direction and support. Thanks also to Richard Hamilton for noticing a simplification, to Michael Struwe for an encouraging discussion, and to D. H. Phong for initially suggesting the problem.

\end{document}